\newtheorem{theorem}{Theorem}
\newtheorem{lemma}{Lemma}
\newtheorem{false statement}{False statement}
\theoremstyle{definition}
\newtheorem{problem}{Problem}
\newtheorem{claim1}{Claim}
\begin{document}

\title{\bf\Large Solution to a problem on hamiltonicity
of graphs under Ore- and Fan-type heavy subgraph
conditions\thanks{Supported by NSFC (No. 11271300), and the project NEXLIZ -- CZ.1.07/2.3.00/30.0038, which is
co-financed by the European Social Fund and the state budget of the
Czech Republic.}}

\date{}

\author{Bo Ning\footnote{Center for Applied Mathematics,~Tianjin University, Tianjin, 300072, P.R.~China. Email:
bo.ning@\break tju.edu.cn (B. Ning).}, Shenggui Zhang\footnote{Northwestern Polytechnical University, Xi'an, Shaanxi 710072, P.R.~China. Email:~sgzhang@nwpu.\break edu.cn (S. Zhang).}
and Binlong Li\footnote{Corresponding author. Northwestern Polytechnical University, Xi'an, Shaanxi 710072, P.R.~China;
Department of Mathematics, NTIS-New Technologies for the Information Society, University of West Bohemia, 30614 Pilsen, Czech Republic. Email: libinlong@mail.nwpu.edu.cn
(B. Li).}}

\maketitle
\begin{abstract}
A graph $G$ is called \emph{claw-o-heavy} if every induced claw
($K_{1,3}$) of $G$ has two end-vertices with degree sum at least
$|V(G)|$. For a given graph $S$, $G$ is called \emph{$S$-f-heavy} if
for every induced subgraph $H$ of $G$ isomorphic to $S$ and every
pair of vertices $u,v\in V(H)$ with $d_H(u,v)=2$, there holds
$\max\{d(u),d(v)\}\geq |V(G)|/2$. In this paper, we prove that every
2-connected claw-\emph{o}-heavy and $Z_3$-\emph{f}-heavy graph is
hamiltonian (with two exceptional graphs), where $Z_3$ is the graph
obtained by identifying one end-vertex of $P_4$ (a path with 4
vertices) with one vertex of a triangle. This result gives a
positive answer to a problem proposed in [B. Ning, S. Zhang, Ore-
and Fan-type heavy subgraphs for Hamiltonicity of 2-connected
graphs, Discrete Math. 313 (2013) 1715--1725], and also implies two
previous theorems of Faudree et al. and Chen et al., respectively.

\noindent {\bf Keywords:} Induced subgraphs; Claw-\emph{o}-heavy graphs; \emph{f}-Heavy subgraphs; Hamiltonicity
\smallskip

\noindent {\bf AMS Subject Classification (2000):} 05C38, 05C45
\end{abstract}

\section{Introduction}
Throughout this paper, the graphs considered are simple, finite and
undirected. For terminology and notation not defined here, we refer
the reader to Bondy and Murty \cite{BM}.

Let $G$ be a graph. For a vertex $v\in V(G)$, we use $N_G(v)$ to
denote the set, and $d_G(v)$ the number, of neighbors of $v$ in $G$.
When there is no danger of ambiguity, we use $N(v)$ and $d(v)$
instead of $N_G(v)$ and $d_G(v)$. If $H$ and $H'$ are two subgraphs
of $G$, then we set $N_H(H')=\{v\in V(H): N_G(v)\cap
V(H')\neq\emptyset\}$. For two vertices $u,v\in V(H)$, the
\emph{distance} between $u$ and $v$ in $H$, denoted by $d_H(u,v)$,
is the length of a shortest path connecting $u$ and $v$ in $H$. In
particular, when we use the notation $G$ to denote a graph, then for
some subgraph $H$ of $G$, we set $N_H(v)=N_G(v)\cap V(H)$ and
$d_H(v)=|N_H(v)|$ (so, if $G'$ is another graph defined on the same
vertex set $V(G)$ and $H$ is a subgraph of $G'$, we will not use
$N_H(v)$ to denote $N_{G'}(v)\cap V(H)$).

We call $H$ an \emph{induced subgraph} of $G$, if for every $x,y\in
V(H)$, $xy\in E(G)$ implies that $xy\in E(H)$. For a given graph
$S$, $G$ is called $S$-\emph{free} if $G$ contains no induced
subgraph isomorphic to $S$. Following \cite{LRWZ}, $G$ is called
\emph{$S$-o-heavy} if every induced subgraph of $G$ isomorphic to
$S$ contains two nonadjacent vertices with degree sum at least
$|V(G)|$ in $G$. Following \cite{NZ}, $G$ is called
\emph{$S$-f-heavy} if for every induced subgraph $H$ isomorphic to
$S$ and any two vertices $u,v\in V(H)$ such that $d_H(u,v)=2$, there
holds $\max\{d(u),d(v)\}\geq |V(G)|/2$. Note that an $S$-free graph
is $S$-\emph{o}-heavy ($S$-\emph{f}-heavy).

The \emph{claw} is the bipartite graph $K_{1,3}$. Note that a
claw-\emph{f}-heavy graph is also claw-\emph{o}-heavy. Further
graphs that will be often considered as forbidden subgraphs are
shown in Fig. 1.

\begin{center}
\setlength{\unitlength}{0.9pt}
\begin{picture}(360,200)

\thicklines

\put(5,140){\multiput(20,30)(50,0){5}{\put(0,0){\circle*{6}}}
\put(20,30){\line(1,0){100}} \put(170,30){\line(1,0){50}}
\qbezier[4](120,30)(145,30)(170,30) \put(20,35){$v_1$}
\put(70,35){$v_2$} \put(120,35){$v_3$} \put(170,35){$v_{i-1}$}
\put(220,35){$v_i$} \put(115,10){$P_i$ (Path)}}

\put(265,130){\put(20,30){\circle*{6}} \put(70,30){\circle*{6}}
\put(45,55){\circle*{6}} \put(20,30){\line(1,0){50}}
\put(20,30){\line(1,1){25}} \put(70,30){\line(-1,1){25}}
\put(15,10){$C_3$ (Triangle)}}

\put(0,0){\put(20,30){\circle*{6}} \put(70,30){\circle*{6}}
\multiput(45,55)(0,25){4}{\put(0,0){\circle*{6}}}
\put(20,30){\line(1,0){50}} \put(20,30){\line(1,1){25}}
\put(70,30){\line(-1,1){25}} \put(45,55){\line(0,1){25}}
\put(45,105){\line(0,1){25}} \qbezier[4](45,80)(45,92.5)(45,105)
\put(50,80){$v_1$} \put(50,105){$v_{i-1}$} \put(50,130){$v_i$}
\put(40,10){$Z_i$}}

\put(90,0){\put(45,40){\circle*{6}} \put(45,40){\line(-1,1){25}}
\put(45,40){\line(1,1){25}} \put(20,65){\line(1,0){50}}
\multiput(20,65)(50,0){2}{\multiput(0,0)(0,30){2}{\put(0,0){\circle*{6}}}
\put(0,0){\line(0,1){30}}} \put(25,10){$B$ (Bull)}}

\put(180,0){\multiput(20,30)(50,0){2}{\multiput(0,0)(0,30){2}{\put(0,0){\circle*{6}}}
\put(0,0){\line(0,1){30}}}
\multiput(45,85)(0,30){2}{\put(0,0){\circle*{6}}}
\put(45,85){\line(0,1){30}} \put(20,60){\line(1,0){50}}
\put(20,60){\line(1,1){25}} \put(70,60){\line(-1,1){25}}
\put(25,10){$N$ (Net)}}

\put(270,0){\put(45,30){\circle*{6}} \put(20,55){\line(1,0){50}}
\put(45,30){\line(1,1){25}} \put(45,30){\line(-1,1){25}}
\multiput(20,55)(0,30){2}{\put(0,0){\circle*{6}}}
\multiput(70,55)(0,30){3}{\put(0,0){\circle*{6}}}
\put(20,55){\line(0,1){30}} \put(70,55){\line(0,1){60}}
\put(10,10){$W$ (Wounded)}}

\end{picture}

\small Fig.~1. Graphs $P_i,C_3,Z_i,B,N$ and $W$.
\end{center}

Bedrossian \cite{B} characterized all connected forbidden pairs for
a 2-connected graph to be hamiltonian.

\begin{theorem}\label{thB}
\emph{(Bedrossian \cite{B})} Let $G$ be a 2-connected graph and let
$R$ and $S$ be connected graphs other than $P_3$. Then $G$ being
$R$-free and $S$-free implies $G$ is hamiltonian if and only if (up
to symmetry) $R=K_{1,3}$ and $S=C_3,P_4,P_5,P_6,Z_1,Z_2,B,N$ or $W$.
\end{theorem}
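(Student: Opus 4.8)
\medskip

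\noindent\textbf{Proof proposal.} I would prove Theorem~\ref{thB} by treating its two implications separately. The necessity (``only if'') direction is a bounded, constructive argument; the sufficiency (``if'') direction, which has to be verified for each of the nine choices of $S$, is where the substantial work lies.

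\emph{Necessity.} Suppose $\{R,S\}$ is a pair of connected graphs, neither equal to $P_3$, such that every $2$-connected $\{R,S\}$-free graph is hamiltonian. Then for every pair $\{R,S\}$ \emph{not} of the claimed form I must exhibit a $2$-connected non-hamiltonian graph that is simultaneously $R$-free and $S$-free. The plan is to assemble a short list of explicit $2$-connected non-hamiltonian graphs and use the fact that such a forbidden pair must ``hit'' each of them, i.e.\ each listed graph must contain an induced copy of $R$ or of $S$. First I would use the graphs $G_m$ obtained by joining two non-adjacent vertices to every vertex of a disjoint union of three copies of $K_m$ (so $G_1=K_{2,3}$): a Hamilton cycle would have to thread three clique-blocks through only two connector vertices, which is impossible, so each $G_m$ is $2$-connected and non-hamiltonian, and moreover $P_4$-free; this forces one of $R,S$ to be $P_4$-free and, in fact, embeddable in the ``limit'' graph $G_\infty$, whose only connected non-$P_3$ induced trees are $K_{1,3}$ (and the degenerate $P_1,P_2$, which are discarded since $P_1$- or $P_2$-freeness is vacuous for $2$-connected graphs). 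I would combine this with a triangle-free $2$-connected non-hamiltonian graph of large girth, whose bounded-order induced subgraphs are forests and which contains an induced $K_{1,3}$ as soon as it has a vertex of degree at least $3$: this forces one of $R,S$ to be a subtree of such a graph. Intersecting these constraints drives one of $R,S$ to be $K_{1,3}$. Having fixed $R=K_{1,3}$, I would bound $S$ using $2$-connected non-hamiltonian \emph{claw-free} graphs; a convenient source is line graphs $L(H)$ of $2$-edge-connected graphs $H$ admitting no dominating closed trail. By choosing $H$ to carry long subdivided paths or several ``pendant triangle'' gadgets, $L(H)$ can be made to avoid any prescribed connected graph other than $P_3,C_3,P_4,P_5,P_6,Z_1,Z_2,B,N$ or $W$; a finite check that each candidate $S$ outside this list is excluded by one such graph completes the direction.

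\emph{Sufficiency.} Fix $S$ from the list and let $G$ be a $2$-connected $\{K_{1,3},S\}$-free graph; I want $G$ hamiltonian. The uniform tool is the closure of a claw-free graph: $\mathrm{cl}(G)$ is again $2$-connected, is hamiltonian if and only if $G$ is, and is the line graph of a triangle-free graph $H$, which is then essentially $2$-edge-connected; moreover $\mathrm{cl}(G)$ is hamiltonian exactly when $H$ has a dominating closed trail. So for each $S$ the problem reduces to: if $L(H)$ is $S$-free (equivalently $\mathrm{cl}(G)$ is $S$-free, which one must first check is inherited from $G$ in the relevant cases, or else argue directly on $G$) then $H$ has a dominating closed trail. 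For $S=C_3$ this is immediate, since a $\{K_{1,3},C_3\}$-free graph has maximum degree at most $2$ and, being $2$-connected, is already a cycle. For $S=P_4,P_5,P_6$ one shows $H$ (equivalently $\mathrm{cl}(G)$) has very small diameter and is close to complete, whence a dominating closed trail is easy to produce. For $S=Z_1,Z_2,B,N,W$ I would argue by contradiction from a longest cycle $C$ of $G$: a vertex off $C$ together with its neighbours on $C$ yields, via claw-freeness and the exclusion of $S$, a local configuration that lets $C$ be rerouted or extended.

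\emph{Main obstacle.} I expect the real difficulty to be the sufficiency direction for the ``large'' subgraphs $P_6$, $B$, $N$ and especially $W$: there the structure of $G$ no longer collapses to something as rigid as a near-complete graph or a cycle, so one must run a genuine case analysis on how a longest cycle meets an excluded copy of $S$, eliminating many subcases before a Hamilton cycle can be built or extended. By contrast the necessity direction, though fiddly, is bounded finite casework once the right handful of non-hamiltonian example graphs has been chosen.
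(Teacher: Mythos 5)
The paper contains no proof of Theorem~\ref{thB}: it is imported verbatim from Bedrossian's thesis \cite{B}, so there is no in-paper argument to compare yours against. Judged on its own terms, your outline follows the standard strategy from the literature (explicit non-hamiltonian families for necessity; the claw-free closure plus longest-cycle arguments for sufficiency), and the individual ingredients you name are sound ($G_m$ is indeed $2$-connected, non-hamiltonian and $P_4$-free; claw-free plus triangle-free does force maximum degree $2$). But it is a roadmap rather than a proof, and it never surfaces the one point that is specific to \emph{this} version of the theorem: since Theorem~\ref{thB} carries no lower bound on $|V(G)|$, the necessity direction must show that $S=Z_3$ does \emph{not} belong to the list, and the only witnesses are the nine-vertex graphs $L_1=P_{T,T,T}$ and $L_2=P_{3,T,T}$ of Fig.~2, which are $2$-connected, claw-free, $Z_3$-free and non-hamiltonian. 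Your generic recipe of line graphs without dominating closed trails does contain $L_1$ (it is the line graph of $K_{2,3}$ with a pendant edge at each vertex of the $3$-side), but you never isolate this case, even though it is precisely what separates Theorem~\ref{thB} from Theorem~\ref{thFG} (where order at least $10$ admits $Z_3$) and is the reason the exceptional graphs appear in Theorem~\ref{thNZL}.

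Two further gaps. In the necessity direction, your two families give the disjunctions ``$R$ or $S$ embeds in $G_m$'' and ``$R$ or $S$ is a tree,'' and the step ``intersecting these constraints drives one of $R,S$ to be $K_{1,3}$'' silently assumes both constraints land on the same graph; the cross-cases (e.g.\ $R=C_3$ or $C_4$ with $S$ a long path or a large spider) need additional witnesses such as $K_{2,3}$ and the Petersen graph, and this bookkeeping is the actual content of the direction. In the sufficiency direction, $S$-freeness is in general \emph{not} preserved by the Ryj\'a\v{c}ek closure, so the reduction to line graphs must be justified separately for each $S$ (you flag this, to your credit, but it is where the work lives), and the cases $S=Z_2,B,N,W$ are each substantial theorems in their own right rather than ``a local configuration that lets $C$ be rerouted.'' As a plan your proposal is reasonable; as a proof it is not yet one.
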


Faudree and Gould \cite{FG} extended Bedrossian's result by giving a
proof of the `only if\,' part based on infinite families of
non-hamiltonian graphs.

\begin{theorem}\label{thFG}
\emph{(Faudree and Gould \cite{FG})} Let $G$ be a 2-connected graph
of order at least 10 and let $R$ and $S$ be connected graphs other
than $P_3$. Then $G$ being $R$-free and $S$-free implies $G$ is
hamiltonian if and only if (up to symmetry) $R=K_{1,3}$ and
$S=C_3,P_4,P_5,P_6,Z_1,Z_2,Z_3,B,N$ or $W$.
\end{theorem}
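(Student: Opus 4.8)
\emph{Proof proposal.} Write $\mathcal{S}=\{C_3,P_4,P_5,P_6,Z_1,Z_2,Z_3,B,N,W\}$, so the pairs to be characterised are, up to symmetry, the $\{K_{1,3},S\}$ with $S\in\mathcal{S}$. For the sufficiency direction, let $G$ be $2$-connected of order at least $10$ and both $R$-free and $S$-free for an admissible pair; we may assume $R=K_{1,3}$ and $S\in\mathcal{S}$. If $S\neq Z_3$, then $(R,S)$ already occurs in Bedrossian's list, so Theorem~\ref{thB} shows $G$ is hamiltonian and the order bound is not needed. Hence the only genuinely new case is $S=Z_3$, and here the plan is: pass to the Ryj\'{a}\v{c}ek closure $\clc(G)$, which is claw-free, has the same circumference as $G$ (so $G$ is hamiltonian iff $\clc(G)$ is), and is the line graph $L(H)$ of a triangle-free graph $H$; first determine which induced configurations of $H$ are forbidden by $G$ being $Z_3$-free, equivalently how much of $Z_3$-freeness survives in $L(H)$, so as to confine $H$ to a short list of shapes; then use the Harary--Nash-Williams criterion --- $L(H)$ is hamiltonian precisely when $H$ has a dominating closed trail --- and build such a trail directly from the restricted structure of $H$. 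The hypothesis $|V(G)|\ge 10$, i.e.\ $|E(H)|\ge 10$, enters only to discard the finitely many small $H$ admitting no dominating closed trail; these are exactly the reason $Z_3$ is absent from Bedrossian's order-free list.

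For the necessity direction, suppose $(R,S)$ are connected, neither is $P_3$, and every $2$-connected $\{R,S\}$-free graph of order at least $10$ is hamiltonian. Following Faudree and Gould, the idea is to replace Bedrossian's possibly small certificates by \emph{infinite} families. Step~1, one of $R,S$ equals $K_{1,3}$: the graphs $F_m$ obtained from an edge $xy$ by adding $m$ pairwise non-adjacent vertices joined to both $x$ and $y$, and the theta graphs $\Theta_\ell$ consisting of two vertices joined by three internally disjoint paths of length $\ell$, are $2$-connected, non-hamiltonian, and of unbounded order; their connected induced subgraphs other than $P_3$ are, respectively, stars and ``books'', and paths, subdivided claws (trees with a single degree-$3$ vertex) and theta subgraphs. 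Since some member of $\{R,S\}$ must be an induced subgraph of all large $F_m$ and some member of all large $\Theta_\ell$, a short analysis --- using in addition $K_{m,m+1}$, the $\Theta_\ell$ again, and suitable complete-multipartite-type graphs to refute the cases in which $R$ or $S$ carries a triangle or a star $K_{1,j}$ with $j\ge 4$ --- forces $K_{1,3}\in\{R,S\}$.

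Step~2, with $R=K_{1,3}$ one shows $S\in\mathcal{S}$. Now infinite families of $2$-connected \emph{claw-free} non-hamiltonian graphs are needed, and $S$ must induce-embed into all large members of each. The basic one is $L(\Theta_\ell)$ --- two triangles joined by three internally disjoint induced paths --- which is claw-free (being a line graph), $2$-connected, and non-hamiltonian because $\Theta_\ell$ has no dominating closed trail; its connected induced subgraphs force $S$ to be a path, a triangle carrying one, two or three pendant paths at distinct vertices, or two triangles joined by a path (possibly with extra pendant paths). To trim this down to exactly $\mathcal{S}$ I would adjoin further families --- line graphs of theta graphs with one or two short branches, and small clique-inflations of other $2$-connected non-hamiltonian graphs that remain claw-free --- arranged so that every connected $S\notin\mathcal{S}\cup\{P_3\}$ (in particular $P_k$ with $k\ge 7$, $Z_k$ with $k\ge 4$, the longer $W$-type graphs, and the ``dumbbells'') is avoided by one of them.

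The difficulty sits in two places. In sufficiency, the $Z_3$ case is the only real content: pinning down the line-graph model $H$ of $\clc(G)$ tightly enough to produce a dominating closed trail, and correctly isolating the small graphs that the order bound is present to kill. In necessity, $2$-connected claw-free non-hamiltonian graphs are comparatively rare (by Matthews--Sumner-type phenomena, sufficient connectivity forces hamiltonicity), so assembling enough infinite families to eliminate every non-admissible $S$ is the delicate part --- and this is precisely where Faudree and Gould's ``infinite families'' refinement over Bedrossian does its work.
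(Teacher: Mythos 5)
First, a point of order: the paper does not prove Theorem~\ref{thFG} at all --- it is quoted from Faudree and Gould \cite{FG} as motivating background. The only portion the present paper touches is the sufficiency of the pair $(K_{1,3},Z_3)$, which it obtains as a corollary of Theorem~\ref{thNZL} together with the observation that the exceptional graphs $L_1,L_2$ have only $9$ vertices. So there is no in-paper proof to match yours against; I can only judge your outline on its own terms and against the machinery the paper does develop.

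On those terms your proposal is a credible road map but not a proof: the two steps you yourself flag as ``the difficulty'' are genuine gaps rather than routine verifications. In the sufficiency direction the only new case is $S=Z_3$, and your plan hides the real obstacle there: it is not automatic that $Z_3$-freeness transfers to the Ryj\'a\v{c}ek closure, so ``determine how much of $Z_3$-freeness survives in $L(H)$'' is precisely the hard step, not a preliminary --- compare Lemma~\ref{LeCenter} of this paper, which in the heavy setting manages to salvage only $Z_3$-\emph{center}-heaviness of the closure, and whose proof is the bulk of the paper's technical work. A route that actually closes this case is the one implicit here: apply Brousek's Theorem~\ref{ThBr} to the (claw-free, non-hamiltonian) closure to obtain an induced $P_{l_1,l_2,l_3}\in\mathcal{P}$, then use the surviving $Z_3$-condition to eliminate every member of $\mathcal{P}$ except $L_1=P_{T,T,T}$ and $L_2=P_{3,T,T}$, which the hypothesis $|V(G)|\geq 10$ excludes; this replaces your unexecuted dominating-closed-trail construction by a finite structural case analysis. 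In the necessity direction the argument is only gestured at: ``a short analysis'' forcing $K_{1,3}\in\{R,S\}$, and especially ``I would adjoin further families \dots arranged so that every connected $S\notin\mathcal{S}\cup\{P_3\}$ is avoided by one of them,'' are statements of intent, and the latter is where all the content of Faudree--Gould's refinement lives: for each excluded $S$ (e.g.\ $P_7$, $Z_4$, a net or wounded graph with a lengthened leg, graphs with two triangles) one must exhibit and verify a specific infinite family of $2$-connected claw-free non-hamiltonian graphs omitting $S$ as an induced subgraph. A small sign that the quantifiers have not been fully tracked: a fixed connected $S$ containing two vertex-disjoint triangles cannot be induced in $L(\Theta_\ell)=P_{\ell,\ell,\ell}$ for all large $\ell$ (its two triangles are forced onto the two end-triangles, whose distance grows with $\ell$), so ``two triangles joined by a path'' should already have disappeared from your list of survivors at that stage. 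Until the families are named and checked, the ``only if'' direction remains unproved.
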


Li et al. \cite{LRWZ} extended Bedrossian's result by restricting
Ore's condition to pairs of induced subgraphs of a graph. Ning and
Zhang \cite{NZ} gave another extension of Bedrossian's theorem by
restricting Ore's condition to induced claws and Fan's condition to
other induced subgraphs of a graph.

\begin{theorem}\label{thNZ}
\emph{(Ning and Zhang \cite{NZ})} Let $G$ be a 2-connected graph and
$S$ be a connected graph other than $P_3$. Suppose that $G$ is
claw-o-heavy. Then $G$ being $S$-f-heavy implies $G$ is hamiltonian
if and only if $S=P_4,P_5,P_6,Z_1,Z_2,B,N$ or $W$.
\end{theorem}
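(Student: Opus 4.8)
The plan is to establish the two implications of the equivalence separately. The ``only if'' direction is a matter of exhibiting non-hamiltonian examples, while the ``if'' direction carries the real content, and for it the strategy is to reduce to the claw-free situation already settled by Bedrossian (Theorem~\ref{thB}).

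For the ``only if'' direction I would show that for every connected graph $S$ with $S\notin\{P_3,P_4,P_5,P_6,Z_1,Z_2,B,N,W\}$ there is a $2$-connected claw-\emph{o}-heavy and $S$-\emph{f}-heavy non-hamiltonian graph. If $S\neq C_3$ this costs nothing: by the ``only if'' part of Theorem~\ref{thB} (and the constructions underlying it) there is a $2$-connected $K_{1,3}$-free and $S$-free graph that is not hamiltonian, and since being free is stronger than being heavy, such a graph is automatically claw-\emph{o}-heavy and $S$-\emph{f}-heavy. The case $S=C_3$ needs a separate remark: a triangle has no two vertices at distance $2$, so ``$G$ is $C_3$-\emph{f}-heavy'' holds vacuously for every $G$, and it suffices to produce a single $2$-connected claw-free (hence claw-\emph{o}-heavy) non-hamiltonian graph --- for instance the graph obtained from the prism $C_3\times K_2$ by subdividing each of its three ``rung'' edges once; it is $2$-connected and claw-free, and it is not hamiltonian because each subdivision vertex has degree $2$, which would force a hamilton cycle to use all three rungs of the prism, and no hamilton cycle of the prism does that.

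For the ``if'' direction, fix $S\in\{P_4,P_5,P_6,Z_1,Z_2,B,N,W\}$, let $G$ be $2$-connected, claw-\emph{o}-heavy and $S$-\emph{f}-heavy, and suppose for contradiction that $G$ is not hamiltonian. I would pass to a suitable \emph{closure} $G^{*}$ of $G$ obtained by repeatedly picking an induced claw, say with centre $x$ and leaves $a,b,c$, invoking the claw-\emph{o}-heavy hypothesis to get two nonadjacent leaves, say $a,b$, with $d(a)+d(b)\geq|V(G)|$, and adding the edge $ab$; by the Bondy--Chv\'atal argument each such addition leaves hamiltonicity (and, trivially, $2$-connectivity) unchanged, and the process terminates since only finitely many edges can be added. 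The aim is that $G^{*}$ be claw-free; granting that --- together with the fact that the $S$-\emph{f}-heavy property of $G$ can be tracked through the additions and, for a claw-free closed graph, upgraded to $S$-freeness (possibly after replacing $S$ by a somewhat larger member of Bedrossian's list, using that in a claw-free graph the neighbourhood of any vertex has independence number at most $2$, so a vertex of degree $\geq|V(G)|/2$ carries a large clique) --- Theorem~\ref{thB} applied to the $2$-connected, claw-free, $S$-free graph $G^{*}$ yields that $G^{*}$, and hence $G$, is hamiltonian, a contradiction.

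The hard part, and where the proof really lives, is keeping the hypotheses alive under the closure: adding an edge can create brand new induced claws (for example from an induced $P_3$ together with an isolated vertex), so one cannot add edges blindly --- one must either restrict the additions to pairs $a,b$ satisfying an extra local requirement, or fix a careful order, to guarantee that the final graph is simultaneously claw-free and still controlled by the $S$-\emph{f}-heavy data. Concretely this forces a case analysis over the eight graphs $P_4,P_5,P_6,Z_1,Z_2,B,N,W$: although each is merely a short path, possibly anchored at a triangle, the distance-$2$ pairs inside it sit differently, so each $S$ dictates a slightly different local configuration, with the longer paths $P_5,P_6$ and the bigger graphs $N,W$ demanding the heaviest bookkeeping. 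A self-contained alternative, bypassing the closure, is the standard extremal argument: take a longest cycle $C$, a component $H$ of $G-V(C)$, and (using $2$-connectivity) at least two vertices of $C$ adjacent to $H$; around such a vertex $u$ one finds an induced claw on $u$, a neighbour of $u$ in $H$, and the two neighbours $u^{+},u^{-}$ of $u$ on $C$, together with an induced copy of $S$ straddling $C$ and $H$, and the claw-\emph{o}-heavy and $S$-\emph{f}-heavy conditions then supply enough nonadjacent pairs of degree sum $\geq|V(G)|$ to build a cycle longer than $C$ --- with, once again, the same case analysis over $S$ as the crux.
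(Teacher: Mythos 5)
First, a point of reference: the paper does not prove this statement at all --- it is quoted from Ning and Zhang \cite{NZ} as background --- so there is no in-paper proof to compare against. Judged on its own terms, your ``only if'' direction is essentially sound: for $S\notin\{P_3,C_3,P_4,P_5,P_6,Z_1,Z_2,B,N,W\}$ the non-hamiltonian claw-free, $S$-free examples behind Theorem~\ref{thB} serve (freeness implies both heaviness conditions), and your observation that $C_3$-\emph{f}-heaviness is vacuous, together with the subdivided prism $P_{3,3,3}$, correctly disposes of $S=C_3$. (For $S=Z_3$ the graphs $L_1,L_2$ of this paper are the relevant examples, already covered since $Z_3$ is absent from Bedrossian's list.)

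The ``if'' direction, however, is not a proof but a plan, and its central reduction is wrong as stated. You propose to close $G$ until it is claw-free and then ``upgrade'' $S$-\emph{f}-heaviness to $S$-freeness so that Theorem~\ref{thB} applies. Two problems. First, the edge-addition you describe (join two nonadjacent claw-leaves of large degree sum) is not the \v{C}ada closure and does not terminate in a claw-free graph; the operation that does work completes the neighbourhood of an \emph{o}-eligible vertex, as in Section~2. Second, and more fundamentally, the closure of a claw-\emph{o}-heavy, $S$-\emph{f}-heavy graph is \emph{not} $S$-free in general, so Bedrossian's theorem cannot be invoked: the paper's own Lemma~\ref{LeCenter} is exactly the statement that for $S=Z_3$ the most one can extract is that every induced copy of $Z_3$ surviving in $cl_o(G)$ has a heavy vertex in a prescribed position, and the whole of Section~3 (via Brousek's Theorem~\ref{ThBr} and a heavy-vertex/region analysis) is what it costs to exploit that weaker conclusion. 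Your text concedes that ``the hard part, and where the proof really lives,'' is precisely this bookkeeping, and then omits it for all eight graphs; the alternative longest-cycle argument is likewise only gestured at. So the proposal identifies a reasonable skeleton (closure plus structure theorem for closed non-hamiltonian claw-free graphs) but contains a genuine gap where the entire content of the theorem lies, and the specific bridge it proposes to Theorem~\ref{thB} would fail.
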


Motivated by Theorems \ref{thFG} and \ref{thNZ}, Ning and Zhang
\cite{NZ} proposed the following problem.
\begin{problem}
(Ning and Zhang \cite{NZ}) Is every claw-\emph{o}-heavy and
$Z_3$-\emph{f}-heavy graph of order at least 10 hamiltonian?
\end{problem}

The main goal of this paper is to give an affirmative solution to
this problem. Our answer is the following theorem, where the graphs
$L_1$ and $L_2$ are shown in Fig.~2.

\begin{theorem}\label{thNZL}
Let $G$ be a 2-connected graph. If $G$ is claw-o-heavy and
$Z_3$-f-heavy, then $G$ is either hamiltonian or isomorphic to $L_1$
or $L_2$.
\end{theorem}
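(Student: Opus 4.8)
The plan is to argue by contradiction: assume $G$ is a non-hamiltonian, $2$-connected, claw-$o$-heavy and $Z_3$-$f$-heavy graph, and show that $G\cong L_1$ or $G\cong L_2$. The natural starting point is to invoke a closure-type reduction. A claw-$o$-heavy graph behaves, with respect to hamiltonicity, like a claw-free graph after performing local completions at suitable ``heavy'' edges; so I would first pass to a graph $G'$ obtained from $G$ by adding edges between nonadjacent vertices $u,v$ with $d_G(u)+d_G(v)\ge |V(G)|$ whenever doing so is forced inside an induced claw, taking care that $G'$ is still $2$-connected, non-hamiltonian, and that the $Z_3$-$f$-heavy hypothesis survives in the relevant weakened form (this is the standard device used in the proof of Theorem \ref{thNZ}). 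Since Theorem \ref{thNZ} already resolves the cases $S=Z_1,Z_2$, I may additionally assume that $G$ contains an induced $Z_3$, and in particular an induced $Z_2$ and an induced claw; otherwise $G$ is $Z_2$-free (hence trivially $Z_2$-$f$-heavy) and Theorem \ref{thNZ} gives hamiltonicity, a contradiction.

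Next I would take a longest cycle $C$ in $G$ and a component $H$ of $G-C$, and analyze the attachment structure of $H$ on $C$ in the usual Hopping/insertion manner. Fix $x\in N_C(H)$ and let $x^-,x^+$ be its neighbours on $C$ along a fixed orientation; standard longest-cycle arguments give that $x^-x^+\notin E(G)$, that the ``immediate successors'' set $S^+=\{y^+:y\in N_C(H)\}$ is an independent set, and that no vertex of $S^+$ sends an edge into $H$. The key is to locate an induced $Z_3$ (or at least an induced claw together with a distance-$2$ pair in an induced $Z_3$) whose vertices lie among $x$, its two neighbours on $C$, a short path in $H$, and one or two further vertices of $C$; combined with the claw-$o$-heavy hypothesis at $\{x;x^-,x^+,h\}$ for $h\in N_H(x)$, this forces one of $x^-,x^+$ to be heavy, i.e.\ to have degree $\ge |V(G)|/2$. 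Pushing this around the cycle, together with the independence of $S^+$ and the non-neighbour relations, yields strong degree and adjacency constraints: essentially, $C$ is short, $H$ is a single vertex or a short path, and the neighbourhoods on $C$ are tightly prescribed.

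The last step is the extremal analysis: from the accumulated constraints (the length of $C$, the size and shape of $H$, which vertices are heavy, and the forced non-edges coming from maximality of $C$ and from avoiding a hamiltonian cycle), I would show that the only configurations that do not admit a longer cycle or an insertion are exactly the two graphs $L_1$ and $L_2$ — concretely, by checking that in any other case one can reroute $C$ through $H$, or use a heavy non-edge to close up a Hamilton cycle via Ore's theorem locally (Bondy--Chvátal type exchange), contradicting the choice of $G$. I expect the main obstacle to be the middle step: one must choose the induced $Z_3$ (and the induced claws) carefully enough that the $Z_3$-$f$-heavy and claw-$o$-heavy conditions together pin down the heavy vertices, because a naive choice of subgraph often is not induced, or the distance-$2$ pair in it is not the pair one needs. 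Handling the several sub-cases according to $|V(H)|$ and according to how the attachment vertices of $H$ are spaced on $C$ — and ruling all of them out except the two sporadic graphs — is where the bulk of the technical work, and the real difficulty, will lie.
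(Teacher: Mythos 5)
Your opening move (passing to a closure of the claw-\emph{o}-heavy graph) matches the paper, which uses \v{C}ada's closure $cl_o(G)$ and Theorem \ref{ThC}. From there, however, your plan has two genuine gaps.

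First, you defer the central difficulty to a parenthetical: ``taking care that \dots the $Z_3$-\emph{f}-heavy hypothesis survives in the relevant weakened form (this is the standard device \dots)''. It is not standard, and the hypothesis does not survive in any straightforward form: the local completions create new induced copies of $Z_3$ whose vertices may have gained many neighbours, and the \emph{f}-heavy condition on $G$ says nothing directly about them. Identifying the correct weakened form and proving it is the technical heart of the paper: Lemma \ref{LeCenter} shows that $cl_o(G)$ is $Z_3$-center-heavy (only the vertex $a_1$ adjacent to the triangle need be heavy), and its proof requires the region machinery of Section 2 (interior and frontier vertices, Lemmas \ref{LeClosed} and \ref{LeRegion}) together with a delicate chain of claw and $Z_3$ configurations inside a single region. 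Nothing in your outline produces this, and without some such transferred condition on $cl_o(G)$ the rest of the argument cannot start.

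Second, your middle and final steps --- longest cycle $C$, component $H$ of $G-C$, hopping/insertion, then ``the accumulated constraints'' forcing $L_1$ or $L_2$ --- amount to re-deriving from scratch a classification of 2-connected non-hamiltonian claw-free graphs, and you give no concrete argument for the endgame, which is precisely where the two sporadic graphs must emerge. The paper sidesteps all of this by citing Brousek's Theorem \ref{ThBr}: the non-hamiltonian closure contains an induced $H=P_{l_1,l_2,l_3}$, and three short claims, each exhibiting an explicit induced $Z_3$ and applying center-heaviness together with Lemma \ref{LeClosed}(3) on dissociated pairs, force $H$ to be $L_1$ or $L_2$ and rule out any additional vertices. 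Separately, your side remark is false as stated: a graph with no induced $Z_3$ need not be $Z_2$-free, so you cannot dispose of the $Z_3$-free case by invoking Theorem \ref{thNZ} with $S=Z_2$; that case (claw-\emph{o}-heavy and $Z_3$-free) is not covered by Theorem \ref{thNZ} and still requires the full argument.
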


\begin{center}
\thicklines
\begin{picture}(280,120)
\put(0,0){\multiput(20,30)(80,0){2}{\put(0,0){\circle*{6}}
\put(0,80){\circle*{6}} \put(20,40){\circle*{6}}
\put(0,0){\line(0,1){80}} \put(0,0){\line(1,2){20}}
\put(0,80){\line(1,-2){20}}} \put(20,30){\line(1,0){80}}
\put(20,30){\line(2,1){40}} \put(100,30){\line(-2,1){40}}
\put(20,110){\line(1,0){80}} \put(20,110){\line(2,-1){40}}
\put(100,110){\line(-2,-1){40}} \put(60,50){\circle*{6}}
\put(60,90){\circle*{6}} \put(80,70){\circle*{6}}
\put(60,50){\line(0,1){40}} \put(60,50){\line(1,1){20}}
\put(60,90){\line(1,-1){20}} \put(65,10){$L_1$}}

\put(140,0){\multiput(20,30)(80,0){2}{\put(0,0){\circle*{6}}
\put(0,80){\circle*{6}} \put(20,40){\circle*{6}}
\put(0,0){\line(1,2){20}} \put(0,80){\line(1,-2){20}}}
\put(20,30){\line(1,0){80}} \put(100,30){\line(0,1){80}}
\put(20,30){\line(2,1){40}} \put(100,30){\line(-2,1){40}}
\put(20,110){\line(1,0){80}} \put(20,110){\line(2,-1){40}}
\put(100,110){\line(-2,-1){40}} \put(60,50){\circle*{6}}
\put(60,90){\circle*{6}} \put(80,70){\circle*{6}}
\put(60,50){\line(0,1){40}} \put(60,50){\line(1,1){20}}
\put(60,90){\line(1,-1){20}} \put(65,10){$L_2$}}
\end{picture}

{\small Fig.~2. Graphs $L_1$ and $L_2$.}
\end{center}

Theorem \ref{thNZL} extends the following two previous theorems.

\begin{theorem}
\emph{(Faudree et al. \cite{FGRS})} If $G$ is a 2-connected
claw-free and $Z_3$-free graph, then $G$ is either hamiltonian or
isomorphic to $L_1$ or $L_2$.
\end{theorem}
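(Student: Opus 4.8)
The plan is to argue by contradiction: assume $G$ is a 2-connected claw-free, $Z_3$-free graph that is non-hamiltonian, and show that $G$ must be isomorphic to $L_1$ or $L_2$. The engine of the proof is a longest cycle together with the two local constraints that claw-freeness and $Z_3$-freeness impose near it. Fix a longest cycle $C$ with a cyclic orientation; for $u\in V(C)$ write $u^+$ and $u^-$ for its successor and predecessor along $C$. Since $G$ is 2-connected and not hamiltonian, $V(G)\setminus V(C)\neq\emptyset$, and every component $R$ of $G-C$ has at least two neighbors on $C$.

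First I would establish the basic attachment lemma: if $x\in V(G)\setminus V(C)$ and $u\in N(x)\cap V(C)$, then $x$ is adjacent to neither $u^+$ nor $u^-$ (otherwise inserting $x$ between $u$ and one of its $C$-neighbors yields a longer cycle, contradicting maximality), and hence the claw-free condition applied to the induced star at $u$ with leaves $x,u^-,u^+$ forces $u^-u^+\in E(G)$. Thus every attachment point $u$ of an outside component sits in a triangle $\{u^-,u,u^+\}$ of $C$, with $x$ a pendant neighbor of $u$ outside this triangle --- already a copy of $Z_1$.

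The decisive step is to feed these triangles into the $Z_3$-free condition in two directions. Extending the pendant into the outside component: if $R$ contains an induced path $x\,y\,z$ with $xu\in E(G)$ and $y,z$ nonadjacent to $\{u^-,u,u^+\}$, then $\{u^-,u^+,u,x,y,z\}$ induces $Z_3$. This bounds the depth of $R$ away from $C$ and, combined with claw-freeness (which keeps the independence number of every neighborhood at most $2$), forces each outside component to be very small. Extending the pendant along $C$: reading off the path $u^+,u^{++},u^{+++},\dots$ emanating from the triangle $\{u^-,u,u^+\}$, the $Z_3$-free condition forbids this arc from being a long induced path, so it must be broken by chords of $C$; analyzing these chords bounds the length of the arcs between consecutive attachment points and hence bounds $|V(C)|$. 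Together these two reductions confine $G$ to a finite list of configurations.

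The final step is to carry out the case analysis on this finite list. I would classify configurations by the number of attachment vertices of an outside component and by the chord pattern forced on $C$, repeatedly using the attachment lemma and local independence to eliminate cases; the surviving configurations, checked directly to be non-hamiltonian, are exactly $L_1$ and $L_2$. I expect the main obstacle to be precisely this enumeration: the two constraints interact delicately (claw-freeness creates triangles, $Z_3$-freeness forbids long pendant paths off them, and chords of $C$ can destroy an apparent $Z_3$), so the hard work is to show that no non-hamiltonian graph other than $L_1$ and $L_2$ can simultaneously avoid all induced claws and all induced $Z_3$'s while keeping $C$ a longest cycle.
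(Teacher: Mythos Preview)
Your longest-cycle plan is a reasonable direct attack (and close in spirit to the original argument in \cite{FGRS}), but it is not how this paper obtains the statement. Here the theorem is not proved on its own: it falls out of the main result, Theorem~\ref{thNZL}, since a claw-free, $Z_3$-free graph is trivially claw-\emph{o}-heavy and $Z_3$-\emph{f}-heavy. That proof in turn bypasses any ad hoc longest-cycle analysis by passing to the closure and invoking Brousek's structural theorem (Theorem~\ref{ThBr}): every non-hamiltonian $2$-connected claw-free graph contains an induced $P_{l_1,l_2,l_3}\in\mathcal{P}$, and then one only has to read off induced copies of $Z_3$ inside $P_{l_1,l_2,l_3}$ and its one-vertex extensions to force $l_i\in\{3,T\}$ with at most one $l_i=3$ and to rule out any extra vertex, leaving $G\in\{L_1,L_2\}$. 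The trade-off is clear: your route is self-contained but defers all the difficulty to the final enumeration, which you have not carried out and which is genuinely delicate (a chord of $C$ that kills a putative $Z_3$ can create a claw elsewhere, so the two constraints do not decouple cleanly); the paper's route outsources the longest-cycle combinatorics to Brousek's theorem, after which the remaining work is a few lines. If you want your approach to go through with comparable effort, the cleanest fix is to cite or reprove Brousek's theorem rather than rebuild an equivalent case analysis from scratch.
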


\begin{theorem}
\emph{(Chen et al. \cite{CWZ})} If $G$ is a 2-connected claw-f-heavy
and $Z_3$-f-heavy graph, then $G$ is either hamiltonian or
isomorphic to $L_1$ or $L_2$.
\end{theorem}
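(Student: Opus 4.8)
The plan is to obtain this statement as an immediate consequence of Theorem~\ref{thNZL}, the main result of this paper, by using the observation recorded in the introduction that every claw-\emph{f}-heavy graph is claw-\emph{o}-heavy. Since Theorem~\ref{thNZL} reaches the same conclusion under the weaker claw-\emph{o}-heavy hypothesis, essentially all of the work is already done, and only the transfer of hypotheses needs to be checked.

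First I would make the key implication explicit. Given any induced claw $H$ in $G$ with center $c$ and leaves $x_1,x_2,x_3$, I would relabel so that $d(x_1)\ge d(x_2)\ge d(x_3)$. The three leaves are pairwise nonadjacent and each pair is joined through $c$, so $d_H(x_i,x_j)=2$ whenever $i\ne j$. Applying the claw-\emph{f}-heavy hypothesis to the pair $\{x_2,x_3\}$ yields $\max\{d(x_2),d(x_3)\}=d(x_2)\ge |V(G)|/2$, and since $d(x_1)\ge d(x_2)$ this gives $d(x_1)+d(x_2)\ge |V(G)|$ with $x_1,x_2$ two nonadjacent end-vertices of $H$. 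As $H$ was an arbitrary induced claw, this shows $G$ is claw-\emph{o}-heavy.

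With this in hand the deduction is direct: if $G$ is both claw-\emph{f}-heavy and $Z_3$-\emph{f}-heavy, then by the previous step it is claw-\emph{o}-heavy and $Z_3$-\emph{f}-heavy, which are exactly the hypotheses of Theorem~\ref{thNZL}. I would then invoke that theorem to conclude that $G$ is hamiltonian or isomorphic to $L_1$ or $L_2$, as claimed.

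I do not expect any genuine obstacle here, since the substance of the statement is entirely absorbed into Theorem~\ref{thNZL}; the present theorem is the trivial direction of that strengthening (claw-\emph{f}-heavy weakened to claw-\emph{o}-heavy while the conclusion is preserved). The only point requiring minor care is to confirm that the list of exceptional graphs in Theorem~\ref{thNZL} coincides with the pair $\{L_1,L_2\}$ named here, so that the deduction introduces no spurious exceptions and drops none.
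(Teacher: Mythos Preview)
Your proposal is correct and matches exactly how the paper treats this statement: it is listed as a previous result that is immediately implied by Theorem~\ref{thNZL}, via the observation (stated in the introduction) that every claw-\emph{f}-heavy graph is claw-\emph{o}-heavy. No separate proof is given in the paper beyond that remark, and your verification of the implication is sound.
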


We remark that there are infinite 2-connected claw-\emph{o}-heavy
and $Z_3$-\emph{o}-heavy graphs which are non-hamiltonian, see
\cite{LRWZ}.

Together with Theorem \ref{thNZ} and Theorem \ref{thNZL}, we can
obtain the following result which generalizes Theorem \ref{thFG}.

\begin{theorem}
Let $G$ be a 2-connected graph of order at least 10 and $S$ be a
connected graph other than $P_3$. Suppose that $G$ is claw-o-heavy.
Then $G$ being $S$-f-heavy implies $G$ is hamiltonian if and only if
$S=P_4,P_5,P_6,Z_1,Z_2,Z_3,B,N$ or $W$.
\end{theorem}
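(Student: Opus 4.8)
The plan is to combine Theorem~\ref{thNZ} and Theorem~\ref{thNZL}. The ``if'' direction for $S\in\{P_4,P_5,P_6,Z_1,Z_2,B,N,W\}$ is exactly Theorem~\ref{thNZ}, so the only remaining case in the ``if'' direction is $S=Z_3$. Here I would invoke Theorem~\ref{thNZL}: a $2$-connected claw-\emph{o}-heavy and $Z_3$-\emph{f}-heavy graph is hamiltonian unless it is $L_1$ or $L_2$. Since both $L_1$ and $L_2$ have order $9$ (read off from Fig.~2), the hypothesis ``order at least $10$'' rules these exceptions out, and hamiltonicity follows. One small point to check is that Theorem~\ref{thNZ} is stated without an order restriction, so nothing is lost by adding ``order at least $10$'' to the combined statement.

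For the ``only if'' direction I would argue that if $S$ is a connected graph other than $P_3$ not on the list $\{P_4,P_5,P_6,Z_1,Z_2,Z_3,B,N,W\}$, then there is a $2$-connected claw-\emph{o}-heavy, $S$-\emph{f}-heavy, non-hamiltonian graph of order at least $10$. The cleanest route is to reduce to the forbidden-subgraph setting: a graph that is $K_{1,3}$-free is claw-\emph{o}-heavy, and a graph that is $S$-free is $S$-\emph{f}-heavy (this implication is noted in the excerpt after the definitions). So it suffices to exhibit, for each such $S$, a $2$-connected $\{K_{1,3},S\}$-free non-hamiltonian graph of order at least $10$. But this is precisely the content of the ``only if'' part of Theorem~\ref{thFG} (Faudree and Gould), whose proof supplies infinite families of such graphs. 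Hence no $S$ outside the list can work.

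The only genuine content beyond citation is verifying that the list in Theorem~\ref{thFG} and the list obtained by adjoining $Z_3$ to the list in Theorem~\ref{thNZ} coincide, namely $\{P_4,P_5,P_6,Z_1,Z_2,Z_3,B,N,W\}$ (note $C_3$ disappears because $C_3$-\emph{f}-heaviness is vacuous, there being no pair of vertices at distance $2$ in $C_3$ — but $C_3$ is not relevant here since it does not even appear in Theorem~\ref{thFG}'s conclusion as an $S$ that must be tested against \emph{f}-heaviness in a nontrivial way; in any case $P_4$ already subsumes the $C_3$-free family). The main (and essentially only) obstacle is making sure the exceptional graphs $L_1,L_2$ of Theorem~\ref{thNZL} have order $<10$, so that the order hypothesis legitimately absorbs them; everything else is a bookkeeping assembly of the two cited theorems.
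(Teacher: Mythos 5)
Your proposal is correct and takes essentially the same route as the paper, which states this theorem as an immediate consequence of Theorems~\ref{thNZ} and \ref{thNZL} without further argument: the ``if'' direction combines those two theorems (the order-at-least-$10$ hypothesis discarding the nine-vertex exceptions $L_1,L_2$), and the ``only if'' direction reduces to the free case via ``$S$-free implies $S$-\emph{f}-heavy'' and the Faudree--Gould families. One cosmetic slip only: $C_3$ \emph{does} appear in the list of Theorem~\ref{thFG}; it drops out of the present statement precisely because $C_3$-\emph{f}-heaviness is vacuous, as you correctly observe.
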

\section{Preliminaries}
In this section, we will list some necessary preliminaries. First,
we will introduce the closure theory of claw-\emph{o}-heavy graphs
proposed by \v{C}ada \cite{C}, which is an extension of the closure
theory of claw-free graphs due to Ryj\'a\v{c}ek \cite{R}.

Let $G$ be a graph of order $n$. A vertex $x\in V(G)$ is called
\emph{heavy} if $d(x)\geq n/2$; otherwise, it is called
\emph{light}. A pair of nonadjacent vertices $\{x,y\}\subset V(G)$
is called a \emph{heavy pair} of $G$ if $d(x)+d(y)\geq n$.

Let $G$ be a graph and $x\in V(G)$. Define $B^o_x(G)=\{uv:
\{u,v\}\subset N(x),~d(u)+d(v)\geq |V(G)|\}$. Let $G^o_x$ be a graph
with vertex set $V(G^o_x)=V(G)$ and edge set $E(G^o_x)=E(G)\cup
B^o_x(G)$. Suppose that $G^o_x[N(x)]$ consists of two disjoint
cliques $C_1$ and $C_2$. For a vertex $y\in
V(G)\backslash(N(x)\cup\{x\})$, if $\{x,y\}$ is a heavy pair in $G$
and there are two vertices $x_1\in C_1$ and $x_2\in C_2$ such that
$x_1y,x_2y\in E(G)$, then $y$ is called a \emph{join vertex} of $x$
in $G$. If $N(x)$ is not a clique and $G^o_x[N(x)]$ is connected, or
$G^o_x[N(x)]$ consists of two disjoint cliques and there is some
join vertex of $x$, then the vertex $x$ is called an
\emph{o-eligible vertex} of $G$. The \emph{locally completion of $G$
at $x$}, denoted by $G'_x$, is the graph with vertex set
$V(G'_x)=V(G)$ and edge set $E(G'_x)=E(G)\cup\{uv: u,v\in N(x)\}$.

Let $G$ be a claw-\emph{o}-heavy graph. The \emph{closure} of $G$,
denoted by
$cl_o(G)$, is the graph such that:\\
(1) there is a sequence of graphs $G_1,G_2,\ldots,G_t$ such that
$G=G_1$, $G_t=cl_o(G)$, and for any $i\in \{1,2,\ldots,t-1\}$, there
is an \emph{o}-eligible vertex $x_i$ of $G_i$, such that
$G_{i+1}=(G_i)'_{x_i}$; and\\
(2) there is no \emph{o}-eligible vertex in $G_t$.

\begin{theorem}\label{ThC}
\emph{(\v{C}ada \cite{C})} Let $G$ be a claw-o-heavy graph. Then\\
\emph{(1)} the closure $cl_o(G)$ is uniquely determined;\\
\emph{(2)} there is a $C_3$-free graph $H$ such that $cl_o(G)$ is
the line
graph of $H$; and\\
\emph{(3)} the circumferences of $cl_o(G)$ and $G$ are equal.
\end{theorem}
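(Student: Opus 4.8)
The plan is to transplant Ryj\'a\v{c}ek's closure argument for claw-free graphs \cite{R} into the heavy-pair framework and treat the three assertions in turn. The load-bearing first step is a \emph{preservation lemma}: if $G$ is claw-o-heavy and $x$ is o-eligible, then the local completion $G'_x$ is again claw-o-heavy. A preliminary observation valid for \emph{any} claw-o-heavy graph is that $\alpha(G^o_x[N(x)])\le 2$ for every $x$: three pairwise non-adjacent vertices of $G^o_x$ inside $N(x)$ are pairwise non-adjacent and non-heavy in $G$, so with $x$ they form an induced claw with no heavy pair, a contradiction. Hence $G^o_x[N(x)]$ is always a clique, a connected non-complete graph, or a disjoint union of exactly two cliques $C_1,C_2$. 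With this, one checks that completing $N(x)$ creates no light induced claw: a new induced claw must use an added edge, and the connectivity of $G^o_x[N(x)]$ (respectively the join vertex, in the two-clique case) supplies two nonadjacent vertices of degree sum at least $n$. Since completion only raises degrees of vertices of $N(x)$, every heavy pair of $G$ stays heavy in $G'_x$, so the preexisting claws remain heavy as well.

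For assertion (1) I would then establish a \emph{commutation lemma}: if $x,y$ are both o-eligible in $G$, then in $G'_x$ the vertex $y$ is still o-eligible or $N(y)$ is already complete, and $(G'_x)'_y$ and $(G'_y)'_x$ can be driven to a common graph by further completions. Because each nontrivial completion strictly increases the number of edges, and there are at most $\binom{n}{2}$ of them, the rewriting process terminates. Local confluence plus termination then yields a unique normal form by Newman's lemma, so $cl_o(G)$ is independent of the order in which o-eligible vertices are processed.

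Assertion (2) is the heart of the matter, and I expect its proof to be the main obstacle. Let $\Gamma=cl_o(G)$, which by (1) and the preservation lemma is claw-o-heavy with no o-eligible vertex. The first and hardest task is to show that $\Gamma$ is claw-free. The connected non-complete case of $\Gamma^o_x[N(x)]$ is excluded outright by non-eligibility, so the difficulty concentrates in the two-clique case $\Gamma^o_x[N(x)]=C_1\sqcup C_2$: an induced claw centred at such an $x$ is not ruled out by the local bound $\alpha(\Gamma^o_x[N(x)])\le 2$ alone, and one must combine the large degrees forced by its surviving heavy pair with the absence of any join vertex to reach a global contradiction. Once claw-freeness is in hand, a shorter argument shows that every neighbourhood $N(v)$ induces a disjoint union of at most two cliques: one uses non-eligibility and $\alpha\le 2$ to split $\Gamma^o_v[N(v)]$ into two cliques, then rules out an induced $P_3$ inside a part (which would give a diamond with $v$). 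Thus $\Gamma$ is $\{K_{1,3},K_4-e\}$-free, exactly the Beineke/Krausz characterization of line graphs of triangle-free graphs, yielding a $C_3$-free graph $H$ with $\Gamma=L(H)$.

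Assertion (3) I would prove one completion at a time, since $G\subseteq G'_x$ gives $c(G)\le c(G'_x)$ immediately. For the reverse inequality I would take a longest cycle $C$ of $G'_x$ and examine its use of the added chords $uv$ with $u,v\in N(x)$. If $C$ uses none, it already lies in $G$; otherwise I would reroute $C$ through $x$, through the two cliques $C_1,C_2$, or along a detour justified by the heavy-pair inequality $d(u)+d(v)\ge n$ (an Ore/Bondy--Chv\'atal style insertion), each move producing a cycle of the same length in $G$. The join-vertex configuration is again the most technical case, as the rerouting there must be coordinated with the degree increases the completion has caused; verifying that every chord forced into a longest cycle admits a length-preserving detour in $G$ is the crux of this step.
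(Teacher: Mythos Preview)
The paper does not prove Theorem~\ref{ThC}; it is quoted from \v{C}ada~\cite{C} and used as a black box, so there is no in-paper proof to compare your proposal against. Your outline is a reasonable adaptation of Ryj\'a\v{c}ek's strategy to the $o$-heavy setting and is broadly in the spirit of \v{C}ada's original argument, but any detailed vetting would have to be done against~\cite{C}, not against the present paper.

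One caution on content: in part~(2) you aim to show that $\Gamma=cl_o(G)$ is $\{K_{1,3},K_4-e\}$-free and then invoke the line-graph characterization. Being $\{K_{1,3},K_4-e\}$-free is not by itself equivalent to being the line graph of a triangle-free graph; Beineke's characterization of line graphs involves nine forbidden subgraphs, and the reduction to $\{K_{1,3},K_4-e\}$ only works once one already knows the graph is a line graph (equivalently, once every vertex neighbourhood is a disjoint union of at most two cliques \emph{and} the edge-clique-cover condition holds). The cleaner route, and the one actually used in~\cite{C} and~\cite{R}, is to show directly that every vertex of $\Gamma$ has a neighbourhood that is a disjoint union of at most two cliques; Krausz's theorem then gives the line-graph structure, and the absence of induced diamonds forces the root graph to be triangle-free. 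Your sketch gestures at this but conflates the two characterizations; if you pursue this proof you should keep them separate.
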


Now we introduce some new terminology and notations. Let $G$ be a
claw-\emph{o}-heavy graph and $C$ be a maximal clique of $cl_o(G)$.
We call $G[C]$ a \emph{region} of $G$. For a vertex $v$ of $G$, we
call $v$ an \emph{interior vertex} if it is contained in only one
region, and a \emph{frontier vertex} if it is contained in two
distinct regions. For two vertices $u,v\in V(G)$, we say $u$ and $v$
are \emph{associated} if $u,v$ are contained in a common region of
$G$; {otherwise $u$ and $v$ are \emph{dissociated}.} For a region
$R$ of $G$, we denote by $I_R$ the set of interior vertices of $R$,
and by $F_R$ the set of frontier vertices of $R$.

From the definition of the closure, it is not difficult to get the
following lemma.

\begin{lemma}\label{LeClosed}
Let $G$ be a claw-o-heavy graph. Then\\
\emph{(1)} every vertex is either an interior vertex of a region or
a frontier vertex of two regions;\\
\emph{(2)} every two regions are either disjoint or have only one
common
vertex; and\\
\emph{(3)} every pair of dissociated vertices have degree sum less
than $|V(G)|$ in $cl_o(G)$ (and in $G$).
\end{lemma}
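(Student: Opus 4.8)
The plan is to read off all three assertions from the structural description of $cl_o(G)$ supplied by Theorem~\ref{ThC}. By that theorem I may fix a triangle-free graph $H$ (without isolated vertices) with $cl_o(G)=L(H)$, so that the vertices of $cl_o(G)$ are the edges of $H$, two of them being adjacent in $cl_o(G)$ exactly when the corresponding edges of $H$ share an endpoint, and $|V(G)|=|E(H)|=:n$. The first thing I would record is the exact shape of the maximal cliques of $L(H)$. Since $H$ is triangle-free, any family of pairwise intersecting edges of $H$ must pass through a common vertex (three pairwise intersecting edges with no common vertex would form a triangle), so every clique of $L(H)$ is contained in a star $S_w:=\{e\in E(H):w\in e\}$, and $S_w$ is a maximal clique precisely when $\deg_H(w)\ge 2$ (together with the trivial case of a $K_2$-component, whose single edge is its own region). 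Thus the regions of $G$ are exactly these stars. I would also record the translation used throughout: two vertices of $G$ lie in a common region iff they are adjacent in $cl_o(G)$ (a common region means a common maximal clique; conversely any edge extends to a maximal clique). Hence \emph{associated} means \emph{adjacent in $cl_o(G)$} and \emph{dissociated} means \emph{nonadjacent in $cl_o(G)$}.

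Parts (1) and (2) then follow immediately. For (1), an edge $e=ab$ of $H$ can belong only to the stars $S_a$ and $S_b$ (a maximal clique containing $e$ is a star $S_w$ with $w\in e$), so $e$ lies in at most two regions; it lies in at least one, since every vertex of a finite graph lies in some maximal clique. Hence every vertex is in exactly one region (interior) or in exactly two distinct regions (frontier), which is (1). For (2), two regions are stars $S_u$ and $S_v$, and $S_u\cap S_v$ is the set of edges incident to both $u$ and $v$; as $H$ is simple this is either empty or the single edge $uv$, so the two regions are disjoint or meet in exactly one vertex, which is (2).

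The real content is (3), and here the plan is to argue by contradiction using the fact that $cl_o(G)$ is \emph{closed}, i.e. has no $o$-eligible vertex. Suppose $x,y$ are dissociated but $d_{cl_o(G)}(x)+d_{cl_o(G)}(y)\ge n$. By the translation above $x,y$ are nonadjacent, so $N(x),N(y)\subseteq V(cl_o(G))\setminus\{x,y\}$, a set of size $n-2$; since $|N(x)|+|N(y)|\ge n>n-2$ they have a common neighbour $g$. Writing $g$ as an edge $pq$ of $H$, triangle-freeness forces $cl_o(G)[N(g)]$ to split as two disjoint cliques $C_p\sqcup C_q$ with no edges between them (the edges through $p$ and the edges through $q$, neither group containing $g$); because $x,y$ are nonadjacent they lie in different parts, say $x\in C_p$ and $y\in C_q$. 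Now $\{x,y\}$ is a heavy pair contained in $N(g)$, so $xy\in B^o_g(cl_o(G))$, and this edge joins $C_p$ to $C_q$; hence $(cl_o(G))^o_g[N(g)]$ is connected while $N(g)$ is not a clique, so $g$ is an $o$-eligible vertex of $cl_o(G)$, contradicting closedness. Therefore $d_{cl_o(G)}(x)+d_{cl_o(G)}(y)<n$, and since $E(G)\subseteq E(cl_o(G))$ gives $d_G\le d_{cl_o(G)}$ pointwise, also $d_G(x)+d_G(y)<n$, proving (3).

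I expect the only delicate point to be the two-clique description of $N(g)$ in (3): it is exactly here that triangle-freeness of $H$ is used, to rule out a common neighbour of $p$ and $q$ that would otherwise create edges between $C_p$ and $C_q$, and it is this splitting, combined with the precise definition of an $o$-eligible vertex, that turns the hypothetical heavy pair into a forbidden connecting edge. Parts (1) and (2) are routine once the maximal cliques of $L(H)$ are identified; the bookkeeping of the degenerate $K_2$-component regions should be mentioned but causes no difficulty.
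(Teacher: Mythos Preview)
Your argument is correct. For part~(3) it is essentially the paper's own proof: the paper also picks a common neighbour $x$ of the hypothetical dissociated heavy pair $u,v$, observes that $N_{G'}(x)$ splits into two cliques with $u$ and $v$ in different parts, and concludes that $x$ is $o$-eligible because $uv\in B^o_x(G')$, contradicting closedness.

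For parts~(1) and~(2) you take a genuinely different route. The paper argues directly from closedness: for~(1) it asserts that in the closed graph each neighbourhood is a clique or a disjoint union of two cliques (hence each vertex lies in one or two maximal cliques), and for~(2) it claims that two maximal cliques sharing two vertices $u,v$ would make $u$ (and $v$) $o$-eligible. You instead invoke Theorem~\ref{ThC} up front, realise $cl_o(G)$ as $L(H)$ with $H$ triangle-free, identify the maximal cliques as the edge-stars $S_w$, and read off~(1) and~(2) from the fact that an edge has two endpoints and two stars meet in at most one edge. Your approach makes the region structure completely explicit and sidesteps any case analysis of $o$-eligibility; the price is that it leans on the full strength of Theorem~\ref{ThC}, whereas the paper's argument is in principle more elementary (though its one-line justification in~(1) that neighbourhoods split into at most two cliques tacitly uses claw-freeness of the closure, which is itself most cleanly seen via Theorem~\ref{ThC}). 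Either way the content is the same; your bookkeeping about $K_2$-components and the use of triangle-freeness to prevent edges between $C_p$ and $C_q$ in~(3) is accurate.
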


\begin{proof}
In the proof of the lemma, we let $G'=cl_o(G)$.

(1) Let $v$ be an arbitrary vertex of $G$. Since $G'$ is closed,
$N_{G'}(v)$ is either a clique or a disjoint union of two cliques in
$G'$. Thus $v$ is contained in one or two regions of $G$, and the
assertion is true.

(2) Let $R$ and $R'$ be two regions of $G$, and $C$ and $C'$ be the
two maximal cliques of $G'$ corresponding to $R$ and $R'$,
respectively. If $C$ and $C'$ have two common vertices, say $u$ and
$v$, then $u$ and $v$ will be $o$-eligible vertices of $G'$,
contradicting the definition of the closure of $G$. This implies
that $C$ and $C'$ (and then, $R$ and $R'$) have at most one common
vertex.

(3) Let $u,v$ be two nonadjacent vertices with
$d_{G'}(u)+d_{G'}(v)\geq n=|V(G)|$. Then $u,v$ have at least two
common neighbors in $G'$. Suppose that $u$ and $v$ are not in a
common clique of $G'$. Let $x$ be a common neighbor of $u$ and $v$
in $G'$. Since $N_{G'}(x)$ is not a clique in $G'$, it is the
disjoint union of two cliques, one containing $u$ and the other
containing $v$. Since $uv\in B_x^o(G')$, $x$ is an $o$-eligible
vertex of $G'$, a contradiction. Thus we conclude that $u,v$ are in
a common clique of $G'$, i.e., $u$ and $v$ are associated.\qed
\end{proof}

The next lemma provides some structural information on regions.

\begin{lemma}\label{LeRegion}
Let $G$ be a claw-o-heavy graph and $R$ be a region of $G$. Then\\
\emph{(1)} $R$ is nonseparable;\\
\emph{(2)} if $v$ is a frontier vertex of $R$, then $v$ has an
interior
neighbor in $R$ or $R$ is complete and has no interior vertices;\\
\emph{(3)} for any two vertices $u,v\in R$, there is an induced path
of $G$ from $u$ to $v$ such that every internal vertex of the path
is an
interior vertex of $R$; and\\
\emph{(4)} for two vertices $u,v$ in $R$, if $\{u,v\}$ is a heavy
pair of $G$, then $u,v$ have two common neighbors in $I_R$.
\end{lemma}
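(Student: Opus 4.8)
The natural plan is to pass everything through the line-graph model of the closure. Put $G'=cl_o(G)$; by Theorem~\ref{ThC} we may write $G'=L(H)$ for a triangle-free graph $H$ and identify each vertex of $G$ with the corresponding edge of $H$. Since $H$ has no triangle, every clique of $L(H)$ is a set of edges of $H$ through a common end, so the maximal cliques of $G'$ — the regions of $G$ — are exactly the stars $E_H(w)=\{e\in E(H):w\in e\}$ over vertices $w$ of $H$ with $\deg_H(w)\ge 2$ (and isolated edges). In a region $R=G[E_H(w)]$, a vertex $e=ww'$ is interior iff $\deg_H(w')=1$ and frontier iff $\deg_H(w')\ge 2$, and two vertices are associated iff the corresponding edges of $H$ meet; thus Lemma~\ref{LeClosed} just records elementary incidence facts about $H$. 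The key observation, used throughout, is: \emph{if $z$ is a common neighbour in $G$ of two non-adjacent vertices $u=wa$, $v=wb$ of a region $R=G[E_H(w)]$, then $w\in z$, so $z$ again lies in $R$} — otherwise the edge $z$ of $H$ meets $u$ in $a$ and $v$ in $b$, forcing $z=ab$ and a triangle $wab$. Hence all common neighbours of two non-adjacent vertices of a region stay inside that region.

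I would prove (4) first, since it is independent of the rest and is a pure degree count. A heavy pair is non-adjacent, so $u\not\sim_G v$; then $N_G(u),N_G(v)\subseteq V(G)\setminus\{u,v\}$ gives $|N_G(u)\cap N_G(v)|\ge d_G(u)+d_G(v)-(|V(G)|-2)\ge 2$, and by the observation these common neighbours lie in $E_H(w)$. Writing $u=wa$, $v=wb$, one has $N_G(u)\subseteq(E_H(w)\setminus\{u,v\})\cup(E_H(a)\setminus\{u\})$ and symmetrically for $v$, with all pieces pairwise disjoint since $H$ is triangle-free; this bounds $|N_G(u)\cup N_G(v)|$ above, hence bounds below the number of common neighbours inside $E_H(w)$. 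Subtracting the at most $|F_R|$ frontier vertices and using $|V(G)|=|E(H)|\ge|E_H(w)|+\sum_{wx\in F_R}(\deg_H(x)-1)$ (the outer edges at distinct frontier ends being distinct and avoiding $w$, again by triangle-freeness), a short computation, distinguishing the three cases by whether $u,v$ are interior or frontier, shows that at least two common neighbours are interior.

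Parts (1), (2), (3) I would then obtain, in that order, by combining the line-graph picture with claw-\emph{o}-heaviness and (4). For (1): a separation of $R$ produces, inside $R$, vertices non-adjacent in $G$ whose only common neighbour in $R$ is the separating vertex, while the sides of the separation furnish an induced claw in $G$; claw-\emph{o}-heaviness forces a heavy pair among its leaves, to which (4) assigns two common neighbours in $I_R\subseteq R$ — a contradiction (one must run through the possible shapes of the separation to always land on a heavy pair to which (4) applies). Given (1), part (2) is a similar claw argument: if a frontier vertex $v=wx$ of $R$ had no interior neighbour while $I_R\ne\emptyset$, one uses a vertex of $R_x=G[E_H(x)]$ outside $R$ adjacent to $v$ (available by nonseparability of $R_x$) and a vertex of $I_R$ to build an induced claw whose only possible heavy pair of leaves is a dissociated pair, impossible by Lemma~\ref{LeClosed}(3); if $I_R=\emptyset$, the same idea forces $R$ to be complete. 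Finally for (3): if $u\sim_G v$ take the edge $uv$; otherwise $I_R\ne\emptyset$ by (2), and using (2) to reach $I_R$ from each of $u,v$, together with the observation and (4) to see that $G[I_R]$ is connected, one obtains a $u$--$v$ walk internal to $I_R$ and extracts an induced subpath.

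The routine part is (4). The real obstacle is (1), and — resting on it — (3): one has to chase induced claws through every shape of a separation (or of a ``bad'' frontier vertex), carefully checking in each sub-case, including the degenerate one where the relevant pair has \emph{no} common neighbour, that claw-\emph{o}-heaviness yields a heavy pair to which part~(4) can actually be applied. An alternative route for (3) — induction along the closure sequence, rerouting each newly added edge of a path through the common neighbour the observation locates inside the region — runs into the parallel difficulty of verifying that the rerouting vertex is genuinely interior to the region.
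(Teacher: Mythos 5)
Your part (4) is sound and is essentially the paper's own computation (the paper counts $d_{I_R}(u)+d_{I_R}(v)$ against $|I_R\setminus\{u,v\}|+|F_R\setminus\{u,v\}|+|V(G-R)|$ directly in $G$; your line-graph rephrasing via $E_H(w)$, $E_H(a)$, $E_H(b)$ is the same count). The gap is in parts (1)--(3). The paper does \emph{not} argue directly in $G$ with claws: it proves all three statements by induction along the closure sequence $G=G_1,\dots,G_t=cl_o(G)$, showing at each step that the local completion at $x_{i-1}$ cannot create the edge that would destroy the property (for (1): $x_{i-1}\neq y$ and $x_{i-1}$ cannot see both sides of the separation; for (2): $x_{i-1}$ cannot join $v$ to $I_R$; for (3): a new edge $u'v'$ of the path is rerouted through $x_{i-1}$, which is legitimate precisely because \emph{every $o$-eligible vertex becomes an interior vertex of $cl_o(G)$}). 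Your replacement of this induction by claw-chasing in $G$ is left as a plan, and it genuinely breaks in concrete configurations. For (1): if $G[V(R)]$ is a bowtie with the cut vertex $y$ interior, then $N_G(y)$ induces two disjoint edges, so there is no induced claw centred at $y$, and claws centred at the other four vertices have all their leaves either inside one triangle or inside a \emph{different} region, so part (4) never applies to $R$; the merging of the two triangles in the closure can be driven by a join vertex of $y$ or by completions performed elsewhere, and excluding this is exactly what the paper's step-by-step induction accomplishes.

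For (3) the situation is worse: your route passes through the claim that ``$G[I_R]$ is connected, by the observation and (4)'', but (4) only speaks about \emph{heavy} pairs and interior vertices need not form any; connectivity of $G[I_R]$ is not established and is not the mechanism behind (3). (Consider a putative region inducing a $4$-cycle $ux_1vx_2u$ with $x_1,x_2$ interior and $u,v$ frontier: there $G[I_R]$ is disconnected and no interior $x_1$--$x_2$ path exists. What rules this out is precisely that the vertex whose completion adds $x_1x_2$ must itself end up interior --- the fact you flag at the end as an unresolved ``parallel difficulty'' of the inductive route, and which the paper settles in one line.) Likewise your sketch of (2) asks for an induced claw using ``a vertex of $I_R$'', but a frontier vertex with no interior neighbour is, by hypothesis, adjacent to no such vertex, so the claw you describe cannot be formed at $v$. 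In short: (4) is fine, but (1)--(3) need the closure-sequence induction (or an equally careful substitute), and the direct argument you propose does not go through as stated.
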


\begin{proof}
Let $G_1,G_2,\ldots,G_t$ be the sequence of graphs, and
$x_1,x_2,\ldots,x_{t-1}$ the sequence of vertices in the definition
of $cl_o(G)$.

(1) Suppose that $R$ has a cut-vertex $y$. We prove by induction
that $y$ would be a cut-vertex of $G_i[V(R)]$ for all $i\in[1,t]$.
Since $y$ is a cut-vertex of $G_1[V(R)]=R$, we assume that $2\leq
i\leq t$. By the induction hypothesis, $y$ is a cut-vertex of
$G_{i-1}[V(R)]$. Let $R'$ and $R''$ be two components of
$G_{i-1}[V(R)]-y$, $u$ be a vertex of $R'$ and $v$ be a vertex of
$R''$. Then $u$ and $v$ have at most one common neighbor $y$ in $R$.
Note that each two maximal cliques of $cl_o(G)$ is either disjoint
or have only one common vertex (see Lemma \ref{LeClosed} (1)). This
implies that $u$ and $v$ have no common neighbors in $G_{i-1}-V(R)$.
Hence $\{u,v\}$ is not a heavy pair of $G$. Note that an
\emph{o}-eligible vertex of $G_{i-1}$ will be an interior vertex of
$cl_o(G)$. This implies that $y$ is not an \emph{o}-eligible vertex
of $G_{i-1}$. Thus $x_{i-1}\neq y$. Note that $x_{i-1}$ has no
neighbors in $R'$ or has no neighbors in $R''$. This implies that
there are no new edges in $G_i$ between $R'$ and $R''$. Thus $y$ is
also a cut-vertex of $G_i[V(R)]$. By induction, we can see that $y$
is a cut-vertex of $cl_o(G)[V(R)]$, contradicting the fact that
$V(R)$ is a clique in $cl_o(G)$.

(2) Note that $cl_o(G)[V(R)]$ is complete. If $R$ has no interior
vertex, then $R$ contains no \emph{o}-eligible vertex of $G$. Since
the locally completion of $G$ at every \emph{o}-eligible vertex does
not add an edge in $R$, $R=cl_o(G)[V(R)]$ is complete.

Now we assume that $R$ has at least one interior vertex. Suppose
that $v$ has no interior neighbors in $R$, i.e., $N(v)\cap
I_R=\emptyset$. Using induction, we will prove that $N_{G_i}(v)\cap
I_R=\emptyset$. Since $N_{G_1}(v)\cap I_R=\emptyset$, we assume that
$2\leq i\leq t$. By the induction hypothesis, $N_{G_{i-1}}(v)\cap
I_R=\emptyset$. Note that $x_{i-1}$ is either nonadjacent to $v$ or
nonadjacent to every vertex in $N_{G_{i-1}}(v)\cap V(R)$. This
implies that there are no new edges of $G_i$ between $v$ and
$G_i[V(R)]-v$. Hence $N_{G_i}(v)\cap I_R=\emptyset$. Thus by the
induction hypothesis, we can see that $N_{cl_o(G)}(v)\cap
I_R=\emptyset$, a contradiction.

(3) We use induction on $t-i$ ($t$ is the subscript of
$G_t=cl_o(G)$) to prove that there is an induced path of $G_i[V(R)]$
from $u$ to $v$ such that every internal vertex of the path is an
interior vertex of $R$. Note that $uv$ is an edge in $G_t[V(R)]$. We
are done if $i=t$. Now suppose that there is an induced path $P$ of
$G_i[V(R)]$ from $u$ to $v$ such that every internal vertex of the
path is an interior vertex of $R$. We will prove that there is an
induced path of $G_{i-1}[V(R)]$ from $u$ to $v$ such that every
internal vertex of the path is an interior vertex of $R$. If $P$ is
also a path of $G_{i-1}[V(R)]$, then we are done. So we assume that
there is an edge $u'v'\in E(P)$ such that $u'v'\notin E(G_{i-1})$.
This implies that $u',v'\in N(x_{i-1})$. Since $P$ is an induced
path of $G_i$, $x_{i-1}$ has the only two neighbors $u',v'$ on $P$.
We also note that $x_{i-1}\in V(R)$ is an interior vertex. Thus
$P'=(P-u'v')\cup u'xv'$ (with the obvious meaning) is an induced
path of $G_{i-1}[V(R)]$ from $u$ to $v$ such that every internal
vertex of the path is an interior vertex of $R$. Thus by the
induction hypothesis, the proof is complete.

(4) Since every vertex in $F_R$ has at least one neighbor in $G-R$
and every vertex in $G-R$ has at most one neighbor in $F_R$, we have
$|N_{G-R}(F_R\backslash\{u,v\})|\geq|F_R\backslash\{u,v\}|$.
Furthermore, we have
$n=|I_R\backslash\{u,v\}|+|F_R\backslash\{u,v\}|+|V(G-R)|+2$. Thus,
we get
\begin{align*}
n   & \leq d(u)+d(v)\\
    & =d_{I_R}(u)+d_{I_R}(v)+d_{F_R}(u)+d_{F_R}(v)+d_{G-R}(u)+d_{G-R}(v)\\
    & \leq d_{I_R}(u)+d_{I_R}(v)+2|F_R\backslash\{u,v\}|+
        d_{G-R}(u)+d_{G-R}(v)\\
    & \leq d_{I_R}(u)+d_{I_R}(v)+|F_R\backslash\{u,v\}|+
        |N_{G-R}(F_R\backslash\{u,v\})|+|N_{G-R}(u)|+|N_{G-R}(v)|\\
    & =d_{I_R}(u)+d_{I_R}(v)+|F_R\backslash\{u,v\}|+|N_{G-R}(F_R)|\\
    & \leq d_{I_R}(u)+d_{I_R}(v)+|F_R\backslash\{u,v\}|+|V(G-R)|,
\end{align*}
and $$d_{I_R}(u)+d_{I_R}(v)\geq
n-|F_R\backslash\{u,v\}|-|V(G-R)|=|I_R\backslash\{u,v\}|+2.$$ This
implies that $u,v$ have two common neighbors in $I_R$.\qed
\end{proof}

Let $G$ be a graph and $Z$ be an induced copy of $Z_3$ in $G$. We
denote the vertices of $Z$ as in Fig. 3, and say that $Z$ is
\emph{center-heavy} in $G$ if $a_1$ is a heavy vertex of $G$. If
every induced copy of $Z_3$ in $G$ is center-heavy, then we say that
$G$ is \emph{$Z_3$-center-heavy}.

\begin{center}
\begin{picture}(155,70)
\thicklines

\put(0,-20){\put(20,30){\circle*{5}} \put(20,80){\circle*{5}}
\put(20,30){\line(0,1){50}} \put(20,30){\line(1,1){25}}
\put(20,80){\line(1,-1){25}} \multiput(45,55)(30,0){4}{\circle*{5}}
\put(45,55){\line(1,0){90}} \put(8,78){$b$} \put(8,28){$c$}
\put(42,62){$a$} \put(72,62){$a_1$} \put(102,62){$a_{2}$}
\put(132,62){$a_3$}}

\end{picture}

{\small Fig.~3. The Graph $Z_3$.}
\end{center}

\begin{lemma}\label{LeCenter}
Let $G$ be a claw-o-heavy and $Z_3$-f-heavy graph. Then $cl_o(G)$ is
$Z_3$-center-heavy.
\end{lemma}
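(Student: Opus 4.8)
The plan is to argue by contradiction: suppose $Z$ is an induced copy of $Z_3$ in $G'=\clc_o(G)$ with center $a_1$ light (using the vertex labels $b,c,a,a_1,a_2,a_3$ of Fig.~3), and derive a contradiction. The first step is to transfer the induced $Z_3$ from $G'$ back to $G$. Since $G'$ is the line graph of a triangle-free graph (Theorem~\ref{ThC}(2)), the triangle $\{a,b,c\}$ corresponds to a maximal clique, hence to a region $R$ of $G$, and $a_1,a_2,a_3$ lie outside $R$ with $a_1$ a frontier vertex associated with $a$. Using Lemma~\ref{LeRegion}(2)--(3) I would replace each edge of $Z$ that belongs to $B^o_x$ (i.e.\ is not an edge of $G$) by an induced path through interior vertices of the appropriate region, being careful that the resulting graph contains an induced $Z_3$ of $G$: the path $a$--$b$--$c$ inside $R$ may need to be routed through $I_R$, and likewise the path $a_1$--$a_2$--$a_3$. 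The key point is that $a_1a_2,a_2a_3$ and $a a_1$ being edges of $G'$ does not immediately give edges of $G$, so some care (and possibly a short case analysis on how many of these edges are genuine) is needed to produce an induced $Z_3$ in $G$.

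Once I have an induced $Z_3$ in $G$ with center vertex $w$ playing the role of $a_1$, the $Z_3$-f-heavy hypothesis applied to the pair of vertices at distance $2$ from each other through $w$ (namely the neighbour of $w$ on the tail, and the vertex of the triangle not adjacent to $w$ — more precisely, the two vertices $u,v\in V(Z)$ with $d_Z(u,v)=2$ realized via $w$) yields $\max\{d_G(u),d_G(v)\}\ge n/2$, i.e.\ one of these two vertices is heavy in $G$. The strategy is then to show that this forces $a_1$ itself to be heavy in $G'$ — and since locally completing at an $o$-eligible vertex never decreases any degree, $d_{G'}(a_1)\ge d_G(a_1)$, so it suffices to pin the heaviness onto $a_1$. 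Concretely: if the heavy vertex is $a_1$ we are done; otherwise the heavy vertex is a vertex $z$ associated with $a_1$ (it lies in a common region with $a_1$, being a neighbour of $a_1$ either in $R$ or in the region containing the tail). Then $\{a_1,z\}$ need not be nonadjacent, but $z$ heavy means $d_{G'}(a_1)\ge d_{G}(a_1)\ge$ ... — here I would instead use Lemma~\ref{LeRegion}(4) or a direct counting argument: a heavy vertex $z$ in a region $R^*$ containing $a_1$ forces $|I_{R^*}|$ to be large, and every interior vertex of $R^*$ is in $N_{G'}(a_1)$, giving $d_{G'}(a_1)\ge |R^*|-1\ge$ something $\ge n/2$.

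I expect the main obstacle to be the first step — reconstructing a genuinely induced $Z_3$ of $G$ from the induced $Z_3$ of $\clc_o(G)$ — because the "fill-in" edges of the closure can create chords, and one must check that replacing $B^o_x$-edges by interior paths (via Lemma~\ref{LeRegion}(3)) does not introduce unwanted adjacencies between the two "ends" of the $Z_3$ (the triangle side and the tail side), which would require those ends to lie in disjoint regions meeting in at most one vertex (Lemma~\ref{LeClosed}(2)). A secondary subtlety is the degenerate case where the region $R$ is complete with no interior vertices (the alternative in Lemma~\ref{LeRegion}(2)): then $a,b,c$ are all frontier vertices and the triangle $\{a,b,c\}$ is literally a triangle of $G$, which actually simplifies matters. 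Modulo these structural checks, the argument reduces to the degree count in the last paragraph, which is routine given Lemma~\ref{LeRegion}(4).
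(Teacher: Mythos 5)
Your overall shape (lift the $Z_3$ back to $G$, apply $Z_3$-f-heaviness, transfer heaviness to $a_1$) matches the paper's opening moves, and you correctly identify the crux: the triangle $\{a,b,c\}$ of $Z$ need not survive in $G$. But your proposed fix --- replacing each closure edge by an induced path through interior vertices --- cannot work for the triangle side. If, say, $bc\notin E(G)$, routing $b$--$c$ through $I_R$ produces a cycle of length at least $4$ with a tail, not a $Z_3$, and the $Z_3$-f-heavy hypothesis says nothing about such subgraphs. The interior-path replacement (Lemma~\ref{LeRegion}(3)) is only used in the paper for the tail $a a_1a_2a_3$, yielding $a,a_1',a_2',a_3'$; the case where $abc$ is a genuine triangle of $G$ is then dispatched quickly (either $a_1'$ is heavy, which transfers to $a_1$ because $a_1'$ is $a_1$ itself or an interior vertex of the region containing $\{a,a_1\}$, or else $b$ and $a_3'$ are both heavy yet dissociated, contradicting Lemma~\ref{LeClosed}(3)). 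The entire remaining difficulty --- which your proposal leaves unaddressed --- is the case where one edge of $\{ab,ac,bc\}$ is missing in $G$. The paper resolves it with a separate structural analysis of the region $R$ (Claims 1--4 of its proof): counting arguments producing common interior neighbours for certain heavy pairs, then $d_{I_R}(a)=1$, then $N_R(a)=V(R)\setminus\{a\}$, hence $|I_R|=1$, and finally the claw $\{a,b,c,a_1'\}$ forces $\{b,c\}$ to be a heavy pair needing two common neighbours in $I_R$ by Lemma~\ref{LeRegion}(4), a contradiction. None of this is routine degree counting, and your proposal does not supply a substitute for it.

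A secondary problem: your mechanism for converting ``some vertex of the lifted $Z_3$ is heavy'' into ``$a_1$ is heavy in $G'$'' only works for the vertex playing the role of the center. For the other distance-two pairs ($\{b,a_1'\}$, $\{a_1',a_3'\}$, $\{a,a_2'\}$) the heavy vertex delivered by f-heaviness may be $b$ or $a_3'$, which are not associated with $a_1$ at all; the paper uses such outcomes to produce two dissociated heavy vertices and contradict Lemma~\ref{LeClosed}(3), not to transfer degree to $a_1$. Your closing estimate $d_{G'}(a_1)\geq|V(R^*)|-1\geq n/2$ also requires the heavy vertex to be an \emph{interior} vertex of $R^*$; for a frontier vertex its neighbours are split between two regions and the inequality fails.
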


\begin{proof}
Let $Z$ be an arbitrary induced copy of $Z_3$ in $G'=cl_o(G)$. We
denote the vertices of $Z$ as in Fig. 3, and will prove that $a_1$
is heavy in $G'$.

Let $R$ be the region of $G$ containing $\{a,b,c\}$. Recall that
$I_R$ is the set of interior vertices of $R$, and $F_R$ is the set
of frontier vertices of $R$.

\begin{claim1}\label{ClNeighbor}
$|N_R(a_2)\cup N_R(a_3)|\leq 1$.
\end{claim1}

\begin{proof}
Note that every vertex in $G-R$ has at most one neighbor in $R$. If
$N_R(a_2)=\emptyset$, then the assertion is obviously true. Now we
assume that $N_R(a_2)\neq\emptyset$. Let $x$ be the vertex in
$N_R(a_2)$. Clearly $x\neq a$ and $a_1x\notin E(G')$. If $a_3x\notin
E(G')$, then $\{a_2,a_1,a_3,x\}$ induces a claw in $G'$, a
contradiction. This implies that $a_3x\in E(G')$, and $x$ is the
unique vertex in $N_{G'}(a_3)\cap V(R)$. Thus $N_R(a_2)\cup
N_R(a_3)=\{x\}$.\qed
\end{proof}

\begin{claim1}\label{ClCommon}
Let $x,y$ be two vertices in $I_R\cup\{a\}$. If $xy\in E(G)$ and
$d(x)+d(y)\geq n$, then $x,y$ have a common neighbor in $I_R$.
\end{claim1}

\begin{proof}
Note that every vertex in $F_R$ has at least one neighbor in $G-R$
and every vertex in $G-R$ has at most one neighbor in $R$. By Claim
\ref{ClNeighbor}, $|V(G-R)|\geq|F_R|+1$. Moreover, since $a$ is not
the neighbor of $a_2$ and $a_3$ in $R$,
$|V(G-R)|\geq|F_R\backslash\{a\}|+|N_{G-R}(a)|+1$.

If $x,y\in I_R$, then
\begin{align*}
n   & \leq d(x)+d(y)\\
    & =d_{I_R}(x)+d_{I_R}(y)+d_{F_R}(x)+d_{F_R}(y)\\
    & \leq d_{I_R}(x)+d_{I_R}(y)+2|F_R|\\
    & \leq d_{I_R}(x)+d_{I_R}(y)+|F_R|+|V(G-R)|-1,
\end{align*}
and $$d_{I_R}(x)+d_{I_R}(y)\geq n-|F_R|-|V(G-R)|+1=|I_R|+1.$$ This
implies that $x,y$ have a common neighbor in $I_R$.

If one of $x,y$, say $y$ is $a$, then
\begin{align*}
n   & \leq d(x)+d(a)\\
    & =d_{I_R}(x)+d_{I_R}(a)+d_{F_R}(x)+d_{F_R}(a)+d_{G-R}(a)\\
    & \leq d_{I_R}(x)+d_{I_R}(a)+|F_R|+|F_R\backslash\{a\}|+d_{G-R}(a)\\
    & \leq d_{I_R}(x)+d_{I_R}(a)+|F_R|+|V(G-R)|-1,
\end{align*}
and $$d_{I_R}(x)+d_{I_R}(a)\geq n-|F_R|-|V(G-R)|+1=|I_R|+1.$$ This
implies that $x,a$ have a common neighbor in $I_R$.\qed
\end{proof}

By Lemma \ref{LeRegion} (3), $G$ has an induced path $P$ from $a$ to
$a_3$ such that every vertex of $P$ is either in $\{a,a_1,a_2,a_3\}$
or an interior vertex outside $R$. Let $a,a'_1,a'_2,a'_3$ be the
first four vertices of $P$.

Note that $a'_1$ is either $a_1$ or an interior vertex in the region
containing $\{a,a_1\}$. This implies that $d_{G'}(a_1)\geq
d_{G'}(a'_1)\geq d(a'_1)$. If $a'_1$ is heavy in $G$, then $a_1$ is
heavy in $G'$ and we are done. So we assume that $a'_1$ is not heavy
in $G$.

If $abca$ is also a triangle in $G$, then the subgraph induced by
$\{a,b,c,a'_1,a'_2,a'_3\}$ is a $Z_3$. Since $G$ is
$Z_3$-\emph{f}-heavy and $a'_1$ is not heavy in $G$, $b$ and $a'_3$
are heavy in $G$. By Lemma \ref{LeClosed} (3), $b$ and $a'_3$ are
associated, a contradiction. Thus we conclude that one edge of
$\{ab,ac,bc\}$ is not in $E(G)$.

Note that $R$ is not complete. By Lemma \ref{LeRegion} (2), $a$ has
a neighbor in $I_R$.

\begin{claim1}\label{ClInterior}
$d_{I_R}(a)=1$.
\end{claim1}

\begin{proof}
Suppose that $d_{I_R}(a)\geq 2$. Let $x,y$ be two arbitrary vertices
in $N_{I_R}(a)$. If $xy\in E(G)$, then $\{a,x,y,a'_1,a'_2,a'_3\}$
induces a $Z_3$ in $G$. Note that $a'_1$ is not heavy in $G$. Thus
$x$ and $a'_3$ are heavy in $G$. Note that $x$ and $a'_3$ are
dissociated, a contradiction. This implies that $N_{I_R}(a)$ is an
independent set.

Since $\{a,x,y,a'_1\}$ induces a claw in $G$, and $\{a'_1,x\}$,
$\{a'_1,y\}$ are not heavy pairs of $G$ by Lemma \ref{LeClosed} (3),
we have $\{x,y\}$ is a heavy pair of $G$. We assume without loss of
generality that $x$ is heavy in $G$.

If $a$ is also heavy in $G$, then by Claim \ref{ClCommon}, $a,x$
have a common neighbor in $I_R$, contradicting the fact that
$N_{I_R}(a)$ is an independent set. So we conclude that $a$ is not
heavy in $G$.

Since $\{x,y\}$ is a heavy pair of $G$, by Lemma \ref{LeRegion} (4),
$x,y$ have two common neighbors in $I_R$. Let $x',y'$ be two
vertices in $N_{I_R}(x)\cap N_{I_R}(y)$. Clearly $ax',ay'\notin
E(G)$. If $x'y'\in E(G)$, then $\{x,x',y',a,a'_1,a'_2\}$ induces a
$Z_3$ in $G$. Since $a$ is light, $x',a'_2$ are heavy. Note that
$x'$ and $a'_2$ are dissociated, a contradiction. Thus we obtain
that $x'y'\notin E(G)$.

Note that $\{x,x',y',a\}$ induces a claw in $G$, and $a$ is light in
$G$. So one vertex of $\{x',y'\}$, say $x'$, is heavy in $G$. By
Claim \ref{ClCommon}, $x,x'$ have a common neighbor $x''$ in $I_R$.
Clearly $ax''\notin E(G)$. Thus $\{x,x',x'',a,a'_1,a'_2\}$ induces a
$Z_3$. Since $a$ is not heavy in $G$, $x',a'_2$ are heavy in $G$, a
contradiction.\qed
\end{proof}

Now let $N_{I_R}(a)=\{x\}$.

\begin{claim1}\label{ClAdjacent}
$N_R(a)=V(R)\backslash\{a\}$.
\end{claim1}

\begin{proof}
Suppose that $V(R)\backslash(\{a\}\cup N_R(a))\neq\emptyset$. By
Lemma \ref{LeRegion} (1), $R-x$ is connected. Let $y$ be a vertex in
$V(R)\backslash(\{a\}\cup N_R(a))$ such that $a,y$ have a common
neighbor $z$ in $R-x$. Note that $z$ is a frontier vertex of $R$.
Let $z'$ be a vertex in $N_{G-R}(z)$. Then $\{z,y,a,z'\}$ induces a
claw in $G$. Since $\{a,z'\}$, $\{y,z'\}$ are not heavy pairs of
$G$, $\{a,y\}$ is a heavy pair of $G$. By Lemma \ref{LeRegion} (4),
$a,y$ have two common neighbors in $I_R$, contradicting Claim
\ref{ClInterior}.\qed
\end{proof}

By Claims \ref{ClInterior} and \ref{ClAdjacent}, we can see that
$|I_R|=1$. Recall that one edge of $\{ab,bc,ac\}$ is not in $E(G)$.
By Claim \ref{ClAdjacent}, $ab,ac\in E(G)$. This implies that
$bc\notin E(G)$, and $\{a,b,c,a'_1\}$ induces a claw in $G$. Since
$\{b,a'_1\}$, $\{c,a'_1\}$ are not heavy pairs of $G$, $\{b,c\}$ is
a heavy pair of $G$. By Lemma \ref{LeRegion} (4), $b$ and $c$ have
two common neighbors in $I_R$, contradicting the fact that
$|I_R|=1$.\qed
\end{proof}

Following \cite{Br}, we  define $\mathcal{P}$ to be the class of
graphs obtained by taking two vertex-disjoint triangles
$a_1a_2a_3a_1$, $b_1b_2b_3b_1$ and by joining every pair of vertices
$\{a_i,b_i\}$ by a path $P_{k_i}=a_ic_i^1c_i^2\cdots
c_i^{k_i-2}b_i$, for $k_i\geq 3$ or by a triangle $a_ib_ic_ia_i$. We
denote the graphs in $\mathcal{P}$ by $P_{l_1,l_2,l_3}$, where
$l_i=k_i$ if $a_i,b_i$ are joined by a path $P_{k_i}$, and $l_i=T$
if $a_i,b_i$ are joined by a triangle. Note that $L_1=P_{T,T,T}$ and
$L_2=P_{3,T,T}$.

\begin{theorem}\label{ThBr}
\emph{({Brousek} \cite{Br})} Every non-hamiltonian 2-connected
claw-free graph contains an induced subgraph $H\in \mathcal{P}$.
\end{theorem}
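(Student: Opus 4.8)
My plan is to pass from the given graph to its closure, reinterpret the problem through line graphs and dominating closed trails, and then carry out a structural analysis of the triangle-free graph behind the closure. So let $G$ be $2$-connected, claw-free and non-hamiltonian. Ryj\'a\v{c}ek's closure $\clc(G)$ is again claw-free, $2$-connected and non-hamiltonian, and by his theorem $\clc(G)=L(F)$ for some triangle-free graph $F$; the $2$-connectivity of $L(F)$ forces $F$ to be $2$-edge-connected apart from possibly some pendant edges, each incident with a vertex of degree at least $3$. To transfer the desired conclusion back from $\clc(G)$ to $G$ I would first establish a \emph{stability lemma}: a claw-free graph has an induced subgraph in $\mathcal{P}$ if and only if its closure does. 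The nontrivial direction says that one local completion at an eligible vertex $v$ cannot create the first induced copy of some $P_{l_1,l_2,l_3}$: all new edges lie inside the clique $N(v)$ of the completed graph, and since every member of $\mathcal{P}$ has only triangles as cliques, this should reduce to a bounded case check (whenever a triangle of the putative $P_{l_1,l_2,l_3}$ uses a newly added edge one reroutes through $v$ and exposes a, possibly different, member of $\mathcal{P}$ already present). With the lemma in hand it suffices to find an induced member of $\mathcal{P}$ in $L(F)$.

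Next I would reformulate everything in terms of trails. By the Harary--Nash-Williams theorem, $L(F)$ is hamiltonian if and only if $F$ has a dominating closed trail (a closed trail meeting at least one end of every edge), so here $F$ has none. Combining this with the standard correspondence ``$L(F)$ contains an induced copy of $L(F_0)$ if and only if $F$ contains $F_0$ as a subgraph'', the theorem reduces to a purely graph-theoretic statement: every triangle-free graph $F$ that is $2$-edge-connected apart from pendant edges and has no dominating closed trail contains, as a subgraph, the line-graph preimage $Q_{l_1,l_2,l_3}$ of some $P_{l_1,l_2,l_3}\in\mathcal{P}$. These preimages are explicit: $Q_{l_1,l_2,l_3}$ consists of two vertices $u,v$ joined by three internally disjoint paths, the $i$-th of length $2$ if $l_i=T$ and of length $l_i$ if $l_i\ge 3$, together with a pendant edge at the midpoint of the $i$-th path whenever $l_i=T$; in particular $Q_{T,T,T}$ is $K_{2,3}$ with a pendant edge at each of its three degree-$2$ vertices, and $L(Q_{l_1,l_2,l_3})=P_{l_1,l_2,l_3}$.

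The structural statement just displayed is the heart of the matter, and it is where I expect essentially all of the work --- the main obstacle --- to lie. I would take $F$ minimal, with respect to vertex deletions and suppression of degree-$2$ vertices, subject to being triangle-free, $2$-edge-connected apart from pendant edges, and without a dominating closed trail. Catlin's reduction (contracting collapsible subgraphs preserves all three properties) lets me assume $F$ is reduced, hence sparse, which in particular forces the existence of vertices of degree at least $3$. Choosing a longest closed trail $T$ in $F$, the failure of the dominating property yields an edge $e$ with no end on $T$, and $2$-edge-connectedness then produces two edge-disjoint paths from the component of $e$ in $F-V(T)$ back to $T$. Maximality of $T$, an exchange argument along $T$, and triangle-freeness --- which is precisely what forces both the pendant edges and the lower bound $l_i\ge 3$ on the path-strands --- should then pin the relevant part of $F$ down to ``two degree-$3$ vertices joined by three strands'', that is, to a copy of some $Q_{l_1,l_2,l_3}$, since any additional edge would either lengthen $T$ or produce a smaller counterexample. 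The two delicate points I foresee are (i) the bookkeeping when $F-V(T)$ has several components, or attaches to $T$ in several places, where one must either reduce to the minimal configuration or locate a preimage $Q$ directly inside the attachment structure, and (ii) verifying that the configuration one extracts is exactly a $Q_{l_1,l_2,l_3}$ with the correct strand types, so that running the line-graph correspondence and the stability lemma in reverse really produces an induced member of $\mathcal{P}$ inside the original graph $G$.
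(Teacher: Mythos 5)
This statement is Brousek's theorem, which the paper imports from \cite{Br} without proof, so there is no in-paper argument to compare your attempt against; I can only assess the proposal on its own terms. Your overall skeleton --- pass to the Ryj\'a\v{c}ek closure, use that it is the line graph of a triangle-free graph $F$, translate hamiltonicity of $L(F)$ into the existence of a dominating closed trail via Harary--Nash-Williams, and identify the line-graph preimages $Q_{l_1,l_2,l_3}$ of the members of $\mathcal{P}$ --- is sound and is indeed the standard framework in which this family is studied. The computation $L(Q_{T,T,T})=P_{T,T,T}$ and the subgraph/induced-subgraph correspondence for line graphs are also correct.

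However, what you have written is a plan rather than a proof, and the two places where you defer the work are precisely where the theorem lives. First, the \emph{stability lemma} (an induced member of $\mathcal{P}$ in $\mathrm{cl}(G)$ pulls back to one in $G$) is not a ``bounded case check'': after a local completion at an eligible vertex $v$, an induced $P_{l_1,l_2,l_3}$ using new edges inside the clique on $N(v)$ cannot simply be rerouted through $v$, since $v$ may be adjacent to many other vertices of the configuration, so the rerouted subgraph need be neither induced nor a member of $\mathcal{P}$; proving this stability is a nontrivial theorem in its own right. Second, the central structural claim --- every triangle-free, essentially $2$-edge-connected graph with no dominating closed trail contains some $Q_{l_1,l_2,l_3}$ as a subgraph --- is exactly equivalent to the statement being proved, and you leave it at the level of ``a longest closed trail plus an exchange argument should pin the structure down,'' while yourself flagging it as the place where essentially all the work lies. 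The Catlin-reduction detour adds a further gap: a copy of $Q_{l_1,l_2,l_3}$ found in the reduction of $F$ does not automatically lift to one in $F$, because expanding a contracted collapsible subgraph can destroy the exact pattern needed for a $T$-strand (a path of length two whose middle vertex carries a pendant edge to a \emph{new} vertex). As it stands, the argument assumes at its core what it sets out to prove.
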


\section{Proof of Theorem \ref{thNZL}}
Let $G'=cl_o(G)$. If $G'$ is hamiltonian, then so is $G$ by Theorem
\ref{ThC}, and we are done. Now we assume that $G'$ is not
hamiltonian. By Theorem \ref{ThBr}, $G'$ contains an induced
subgraph $H=P_{l_1,l_2,l_3}\in \mathcal{P}$. We denote the vertices
of $H$ by $a_i,b_i,c_i$ and $c_i^j$ as in Section 2. By Lemma
\ref{LeCenter}, $G'$ is $Z_3$-center-heavy.

\setcounter{claim1}{0}
\begin{claim1}
For $i\in\{1,2,3\}$, $l_i=3$ or $T$; and at most one of
$\{l_1,l_2,l_3\}$ is 3.
\end{claim1}

\begin{proof}
If one of $\{l_1,l_2,l_3\}$ is at least 4, say $l_1\geq 4$, then the
subgraph of $G'$ induced by $\{a_1,a_2,a_3,c_1^1,c_1^2,c_1^3\}$ is a
$Z_3$ (we set $c_1^3=b_1$ if $l_1=4$). Thus $c_1^1$ is heavy in
$G'$. If $l_2=T$, then the subgraph of $G'$ induced by
$\{a_2,a_1,a_3,b_2,b_1,c_1^{l_1-2}\}$ is a $Z_3$, implying $b_2$ is
heavy in $G'$. But $c_1^1$ and $b_2$ are dissociated, a
contradiction. If $l_2\neq T$, then the subgraph of $G'$ induced by
$\{a_2,a_1,a_3,c_2^1,\ldots,c_2^{l_2-2},b_2,b_1\}$ is a $Z_r$ with
$r\geq 3$, implying $c_2^1$ is heavy in $G'$. But $c_1^1$ and
$c_2^1$ are dissociated, a contradiction again. Thus we conclude
that $l_i=3$ or $T$ for all $i=1,2,3$.

If two of $\{l_1,l_2,l_3\}$ equal 3, say $l_1=l_2=3$, then the
subgraphs of $G'$ induced by $\{a_1,a_2,a_3,c_1^1,b_1,b_2\}$ and by
$\{a_2,a_1,a_3,c_2^1,b_2,b_1\}$ are $Z_3$'s. This implies that
$c_1^1$ and $c_2^1$ are heavy in $G'$. But $c_1^1$ and $c_2^1$ are
dissociated, a contradiction. Thus we conclude that at most one of
$\{l_1,l_2,l_3\}$ is 3.\qed
\end{proof}

By Claim 1, we assume without loss of generality that $l_2=l_3=T$
and $l_1=3$ or $T$. If $G'$ has only the nine vertices in $H$, then
$G'=L_1$ or $L_2$, and $G$ has no \emph{o}-eligible vertices. This
implies that $G=L_1$ or $L_2$. Now we assume that $G'$ has a tenth
vertex.

Let $A$ be the region containing $\{a_1,a_2,a_3\}$ and $B$ be the
region containing $\{b_1,b_2,$ $b_3\}$. For $l_i=T$, let $C_i$ be
the region containing $\{a_i,b_i,c_i\}$; and if $l_1=3$, then let
$C_1^1$ and $C_1^2$ be the regions containing $\{a_1,c_1^1\}$ and
$\{b_1,c_1^1\}$, respectively.

\begin{claim1}
$|V(A)|=|V(B)|=|V(C_i)|=3$; and if $l_1=3$, then
$|V(C_1^1)|=|V(C_1^2)|=2$.
\end{claim1}

\begin{proof}
Suppose that $|V(A)|\geq 4$. Let $x$ be a vertex in
$V(A)\backslash\{a_1,a_2,a_3\}$. Then the subgraphs of $G'$ induced
by $\{a_2,a_1,x,b_2,b_3,c_3\}$ and by $\{a_3,a_1,x,b_3,b_2,c_2\}$
are $Z_3$'s. This implies that $b_2$ and $b_3$ are heavy in $G'$.
Since there are two vertices $a_1,x$ nonadjacent to $b_2$ and $b_3$,
$b_2$ and $b_3$ have at least two common neighbors in $G'$. Let $y$
be a common neighbor of $b_2$ and $b_3$ in $G'$ other than $b_1$.
Then $y\in V(B)$, and the subgraphs of $G'$ induced by
$\{b_2,b_1,y,a_2,a_3,c_3\}$ is a $Z_3$. Thus $a_2$ is heavy in $G'$.
By Lemma \ref{LeClosed} (3), $a_2$ and $b_3$ are associated, a
contradiction. Thus we conclude that $|V(A)|=3$, and similarly,
$|V(B)|=3$.

Suppose that $|V(C_i)|\geq 4$ for $l_i=T$. We assume up to symmetry
that $|V(C_2)|\geq 4$. Let $x$ be a vertex in
$V(C_2)\backslash\{a_2,b_2,c_2\}$. Then the subgraph of $G'$ induced
by $\{a_2,c_2,x,a_3,b_3,b_1\}$ is a $Z_3$, implying that $a_3$ is
heavy in $G$. If $l_1=T$, then the subgraph of $G'$ induced by
$\{b_2,c_2,x,b_1,a_1,a_3\}$ is a $Z_3$; if $l_1=3$, then the
subgraph of $G'$ induced by $\{b_2,c_2,x,b_1,c_1,a_1\}$ is a $Z_3$.
In any case, we have $b_1$ is heavy in $G'$. But $a_3$ and $b_1$ are
dissociated in $G$, a contradiction.

Suppose that $l_1=3$ and $|V(C_1^1)|\geq3$. Let $x$ be a vertex in
$V(C_1^1)\backslash\{a_1,c_1^1\}$. Then the subgraphs of $G'$
induced by $\{a_1,c_1^1,x,a_2,b_2,b_3\}$ and by
$\{c_1^1,a_1,x,b_1,b_2,c_2\}$ are $Z_3$'s. This implies that $a_2$
and $b_1$ are heavy in $G'$. But $a_2$ and $b_1$ are dissociated, a
contradiction. Thus we conclude that $|V(C_1^1)|=2$, and similarly,
$|V(C_1^2)|=2$.\qed
\end{proof}

In the following, we set $S=\{v\in V(G'): N_{G'}(v)\cap
V(H)\neq\emptyset\}$.

\begin{claim1}
$l_1=3$, and for $x\in S$, $xc_2,xc_3\in E(G')$.
\end{claim1}

\begin{proof}
By Claim 2, all the neighbors of $a_1,a_2,a_3,b_1,b_2,b_3$ and
$c_1^1$ (if $l_1=3$) are in $H$. Note that $G'$ has at least 10
vertices. The vertices $a_1,a_2,a_3,b_1,b_2,b_3$ and $c_1^1$ (if
$l_1=3$) are not heavy in $G'$.

Let $x$ be a vertex in $S$. Suppose that $l_1=T$. Note that $x$
cannot be adjacent to all the three vertices $c_1,c_2,c_3$. We
assume up to symmetry that $xc_1\in E(G')$ and $xc_2\notin E(G')$.
Then the subgraph of $G'$ induced by $\{a_2,b_2,c_2,a_1,c_1,x\}$ is
a $Z_3$, implying $a_1$ is heavy in $G'$, a contradiction. Thus we
conclude that $l_1=3$.

Suppose that one edge of $xc_2,xc_3$ is not in $E(G')$, say
$xc_2\notin E(G')$. Then the subgraph of $G'$ induced by
$\{a_2,b_2,c_2,a_3,c_3,x\}$ is a $Z_3$, implying $a_3$ is heavy in
$G'$, a contradiction. Thus we conclude that $xc_2,xc_3\in
E(G')$.\qed
\end{proof}

Let $x$ be a vertex in $S$. By Claim 3, $xc_2,xc_3\in E(G')$. If
$G'$ has only ten vertices, then
$C=a_1a_2a_3c_3xc_2b_2b_3b_1c_1^1a_1$ is a Hamilton cycle of $G'$, a
contradiction. Suppose now that $G'$ has an eleventh vertex. Since
$G'$ is 2-connected, let $x'$ be a vertex in $S\backslash\{x\}$. By
Claim 3, $x'c_2,x'c_3\in E(G')$. Thus $xx'\in E(G')$. Note that
$N_{G'}(x)$ is neither a clique nor a disjoint union of two cliques
of $G'$. This implies that $x$ is an \emph{o}-eligible vertex of
$G'$, a contradiction.

The proof is complete.\qed




\end{document}